\newtheorem{thm}{Theorem}[section]
\newtheorem*{thm*}{Theorem}
\newtheorem{lem}[thm]{Lemma}
\newtheorem{fact}[thm]{Fact}
\newtheorem{prop}[thm]{Proposition}
\newtheorem*{prop*}{Proposition}
\newtheorem{cor}[thm]{Corollary}
\newtheorem*{cor*}{Corollary}
\theoremstyle{definition}
\newtheorem{defn}[thm]{Definition}
\newtheorem*{defn*}{Definition}
\newtheorem{remark}[thm]{Remark}
\newtheorem{question}[thm]{Question}
\newtheorem*{question*}{Question}
\newtheorem*{Pquestion*}{Popa's question}
\newtheorem*{conv*}{Convention}
\def\bb{\mathbb}
\def\bb{\mathbb}
\def\cal{\mathcal}
\def\u{\mathsf 1}
\newcommand{\cstar}{$\mathrm{C}^*$}
\def\dotminussym#1#2{%
  \setbox0=\hbox{$\m@th#1-$}%
  \kern.5\wd0%
  \hbox to 0pt{\hss\hbox{$\m@th#1-$}\hss}%
  \raise.6\ht0\hbox to 0pt{\hss$\m@th#1.$\hss}%
  \kern.5\wd0}
\def \u{\mathcal U}
\def \val{\operatorname{val}}
\begin{document}

%%%%%%%%%%%%%%%%%%%%%%%%%%%%%%%%%%%%%%%%%%%%%%

\title{On Tsirelson pairs of \cstar-algebras}
\author{Isaac Goldbring and Bradd Hart}

\address{Department of Mathematics\\University of California, Irvine, 340 Rowland Hall (Bldg.\# 400),
Irvine, CA 92697-3875}
\email{isaac@math.uci.edu}
\urladdr{http://www.math.uci.edu/~isaac}

\address{Department of Mathematics and Statistics, McMaster University, 1280 Main St., Hamilton ON, Canada L8S 4K1}
\email{hartb@mcmaster.ca}
\urladdr{http://ms.mcmaster.ca/~bradd/}

\thanks{I. Goldbring was partially supported by NSF grant DMS-2504477. B. Hart was partially supported by the NSERC}

\begin{abstract}
We introduce the notion of a Tsirelson pair of \cstar-algebras, which is a pair of \cstar-algebras for which the space of quantum strategies obtained by using states on the minimal tensor product of the pair is dense in the space of quantum strategies obtained by using states on the maximal tensor product.  We exhibit a number of examples of such pairs that are ``nontrivial'' in the sense that the minimal tensor product and the maximal tensor product of the pair are not isomorphic.  For example, we prove that any pair containing a \cstar-algebra with Kirchberg's QWEP property is a Tsirelson pair.  We then introduce the notion of a \cstar-algebra with the Tsirelson property (TP) and establish a number of closure properties for this class.  We also show that the class of \cstar-algebras with the TP forms an elementary class (in the sense of model theory), but that this class does not admit an ``effective'' axiomatization.
\end{abstract}
\maketitle

\section{Introduction}

\textbf{Tsirelson's problem} in quantum information theory (see \cite{tsirelson,tsirelson2}) asks whether or not, for any given pair of natural numbers $k,n\in \bb N$ with $k,n\geq 2$, the set $C_{qc}(k,n)$ of \textbf{quantum commuting correlations} coincides with the set $C_{qa}(k,n)$ of \textbf{quantum asymptotic correlations}.  The former set consists of correlation matrices $p(a,b|x,y)\in [0,1]^{k^2n^2}$ (suggestively written as conditional probabilities for their connections with nonlocal games) for which there exists a Hilbert space $\cal H$, POVMS $A^x=(A^x_1,\ldots,A^x_n)$ and $B^y=(B^y_1,\ldots,B^y_n)$ on $\cal H$ for all $x,y\in [k]:=\{1,\ldots,k\}$ satisfying $[A^x_a,B^y_b]:=A^x_aB^y_b-B^y_bA^x_a=0$ for all $x,y\in [k]$ and $a,b\in [n]$, and a unit vector $\xi\in \cal H$ for which $p(a,b|x,y)=\langle A^x_aB^y_b \xi,\xi\rangle$.  (Precise definitions of POVMs and any other undefined notions will be given later in the paper.)  The latter set consists of the subset of those quantum commuting strategies that can be approximated by quantum commuting strategies for which the Hilbert space $\cal H$ can be decomposed as $\cal H_A\otimes \cal H_B$ and for which the POVMS $A^x$ (respectively $B^y$) act only on $\cal H_A$ (respectively only on $\cal H_B$).  It is a consequence of the landmark result $\operatorname{MIP}^*=\operatorname{RE}$ \cite{MIP*} that there are pairs $(k,n)$ for which $C_{qa}(k,n)$ is a proper subset of $C_{qc}(k,n)$, thus providing a negative answer to Tsirelson's problem in general.

Work of Fritz \cite{Fr} and independently Junge et. al. \cite{Junge} allows one to recast Tsirelson's problem in terms of states on tensor products of certain \cstar-algebras.  (It is this connection that allows one to conclude a negative solution to Kirchberg's QWEP problem-and thus, ultimately, a negative solution to the Connes Embedding Problem-from a negative solution to Tsirelson's problem.)  Indeed, set $\bb F(k,n)$ to be the group freely generated by $k$ elements of order $n$ and let $C^*(\bb F(k,n))$ denote its universal group \cstar-algebra.  It was established that, for any $p(a,b|x,y)\in [0,1]^{k^2n^2}$, one has:
\begin{itemize}
    \item $p\in C_{qa}(k,n)$ if and only if there are POVMs $A^x$ and $B^y$ in $C^*(\bb F(k,n))$ and a state $\phi$ on $C^*(\bb F(k,n))\otimes C^*(\bb F(k,n))$ such that $$p(a,b|x,y)=\phi(A^x_a\otimes B^y_b);$$
    \item $p\in C_{qc}(k,n)$ if and only if there are POVMs $A^x$ and $B^y$ in $C^*(\bb F(k,n))$ and a state $\phi$ on $C^*(\bb F(k,n))\otimes_{\max} C^*(\bb F(k,n))$ such that $$p(a,b|x,y)=\phi(A^x_a\otimes B^y_b).$$
\end{itemize}
Throughout this paper, $\otimes$ always denotes the minimal tensor product of \cstar-algebras.

The preceding fact suggests looking at correlations of the form $\phi(A^x_a\otimes B^y_b)$, where $A^x$ and $B^y$ are POVMs from some fixed pair $(C,D)$ of \cstar-algebras and $\phi$ is a state either on $C\otimes D$ or $C\otimes_{\max}D$; in fact, we will consider the closures of these two sets of correlations, which we will denote by $C_{\min}(C,D,k,n)$ and $C_{\max}(C,D,k,n)$ respectively.  (In general, we see no reason why these sets of correlations should be closed, although they do happen to be so for the case that $C=D=C^*(\bb F(k,n))$.)  It is standard fare that $C_{\min}(C,D,k,n)\subseteq C_{\max}(C,D,k,n)$, $C_{\min}(C,D,k,n)\subseteq C_{qa}(k,n)$, and $C_{\max}(C,D,k,n)\subseteq C_{qc}(k,n)$; for the convenience of the reader, proofs of these facts will be given in Section 3.  

Motivated by the above discussion, we will call a pair of \cstar-algebras $(C,D)$ a \textbf{Tsirelson pair} if $C_{\min}(C,D,k,n)=C_{\max}(C,D,k,n)$ for all $(k,n)$.  Denoting by $\bb F_\infty$ the free group on a countably infinite set of generators and using the fact that the canonical projection $C^*(\bb F_\infty)\to C^*(\bb F(k,n))$ admits a ucp lift for each $(k,n)$, the fact that Tsierelson's problem has a negative answer implies that $(C^*(\bb F_\infty),C^*(\bb F_\infty))$ is not a Tsirelson pair.
%Tsirelson's problem is thus equivalent to the question of whether or not $(C^*(\bb F_\infty),C^*(\bb F_\infty))$ is a Tsirelson pair of \cstar-algebras (and thus this pair is not a Tsirelson pair).

Following Pisier, a pair of \cstar-algebras $(C,D)$ is called a \textbf{nuclear pair} if $C\otimes D = C\otimes_{\max}D$.  Clearly, any nuclear pair is a Tsirelson pair.  We show, however, that the class of Tsirelson pairs is much larger than the class of nuclear pairs.  For example, we show that if one element of the pair has Kirchberg's QWEP property, then the pair is a Tsirelson pair.  This follows from a more general fact, namely that the class of Tsirelson pairs is closed under ``quotients.''

We then move on to studying \cstar-algebras with the \textbf{Tsirelson property} (TP), which we define to be those \cstar-algebras which form a Tsirelson pair with any other \cstar-algebra.  Continuing with our analogy with nuclear pairs, \cstar-algebras with the TP are the analog of nuclear \cstar-algebras in this context.  By the previous paragraph, the class of \cstar-algebras with the TP includes the class of \cstar-algebras with the QWEP; we leave open the question of whether or not these classes coincide.  We establish a variety of closure properties of the class of \cstar-algebras with the TP and show that they form an elementary class (in the sense of model theory) but that this class does not admit any ``effective'' set of axioms.  As a consequence, we show that the class of \cstar-algebras without the TP is not closed under ultraproducts.

We are grateful to Micha\l\   Banacki, Narutaka Ozawa, Vern Paulsen, Christopher Schafhauser, David Sherman, and Aaron Tikuisis for useful discussions and comments about this work.

\section{Preliminaries}

\subsection{WEP, QWEP, and LP}

Recall that if $C\subseteq D$ are \cstar-algebras, a \textbf{weak conditional expectation} from $D$ onto $C$ is a ucp map $D\to C^{**}$ restricting to the canonical inclusion $C\hookrightarrow C^{**}$.  If there is a weak conditional expectation from $D$ onto $C$, then we say that $C$ is \textbf{relatively weakly injective (r.w.i.)} in $D$.  It is known that $C$ is r.w.i. in $D$ if and only if:  for any  \cstar-algebra $E$, we have that $C\otimes_{\max}E\subseteq D\otimes_{\max}E$, that is, the canonical map $C\otimes_{\max}E\to D\otimes_{\max}E$ is isometric.  $C$ has the \textbf{weak expectation property (WEP)} if it is r.w.i. in any \cstar-algebra containing it.

Recalling the notion of a nuclear pair from the introduction, Kirchberg's fundamental result from \cite{K} on \cstar-algebras with WEP reads as follows:

\begin{fact}\label{Kirchbergfact}
$C$ has the WEP if and only if $(C,C^*(\bb F_\infty))$ is a nuclear pair.
\end{fact}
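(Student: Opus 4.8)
The plan is to derive this from the relative-weak-injectivity characterization recalled just above, using two external inputs from Kirchberg's theory. First, that $B(H)$ is injective, so that $C$ has the WEP if and only if it is r.w.i.\ in $B(H)$ for one (equivalently every) faithful representation $C\subseteq B(H)$. Second, Kirchberg's theorem that $(B(H),C^*(\bb F_\infty))$ — and indeed $(B(H),C^*(\bb F_S))$ for every free group $\bb F_S$ — is a nuclear pair, which is the tensor-product incarnation of the local lifting property (LLP) of free-group \cstar-algebras.

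For the forward direction, suppose $C$ has the WEP and fix a faithful representation $C\subseteq B(H)$, so that $C$ is r.w.i.\ in $B(H)$. Applying the r.w.i.\ characterization with $E=C^*(\bb F_\infty)$, the canonical map $C\otimes_{\max}C^*(\bb F_\infty)\to B(H)\otimes_{\max}C^*(\bb F_\infty)$ is isometric. By Kirchberg's theorem the target coincides with $B(H)\otimes C^*(\bb F_\infty)$, and since the minimal tensor product is spatial the inclusion $C\otimes C^*(\bb F_\infty)\hookrightarrow B(H)\otimes C^*(\bb F_\infty)$ is also isometric. The canonical quotient map $C\otimes_{\max}C^*(\bb F_\infty)\to C\otimes C^*(\bb F_\infty)$ then sits in a commuting triangle whose other two edges are isometric, forcing it to be isometric as well; hence $(C,C^*(\bb F_\infty))$ is a nuclear pair.

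For the reverse direction, suppose $(C,C^*(\bb F_\infty))$ is a nuclear pair. Since a \cstar-norm is determined by its values on elementary tensors involving finitely many generators, the equality $C\otimes_{\max}C^*(\bb F_\infty)=C\otimes C^*(\bb F_\infty)$ upgrades to $C\otimes_{\max}C^*(\bb F_S)=C\otimes C^*(\bb F_S)$ for every free group $\bb F_S$. Combining this with the nuclear pair $(B(H),C^*(\bb F_S))$ exactly as in the forward direction shows that $C\otimes_{\max}C^*(\bb F_S)\hookrightarrow B(H)\otimes_{\max}C^*(\bb F_S)$ is isometric for every $\bb F_S$. To promote this to full r.w.i.\ of $C$ in $B(H)$, fix an arbitrary unital \cstar-algebra $E$, write it as a quotient $\pi\colon C^*(\bb F_S)\epi E$ with kernel $J$, and compare the two short exact sequences obtained by applying $C\otimes_{\max}(-)$ and $B(H)\otimes_{\max}(-)$, using that the maximal tensor product is exact. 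The isometric embedding at the level of $C^*(\bb F_S)$, together with the LLP of $C^*(\bb F_S)$ used to control the relevant kernels, should descend to an isometric embedding $C\otimes_{\max}E\hookrightarrow B(H)\otimes_{\max}E$; as $E$ was arbitrary, the characterization then yields that $C$ is r.w.i.\ in $B(H)$, hence has the WEP.

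The forward direction is essentially formal once the nuclear-pair theorem for $B(H)$ is granted. The crux — and the step I expect to be the main obstacle — is the descent in the reverse direction: controlling the kernel of the maximal tensor product under the quotient $C^*(\bb F_S)\epi E$ so that isometry against free-group \cstar-algebras promotes to isometry against an arbitrary $E$. This is precisely where the local lifting property of free-group \cstar-algebras is indispensable, and it is the genuinely deep content of Kirchberg's work (equivalently, the nontrivial assertion that $(B(H),C^*(\bb F_\infty))$ is a nuclear pair). I would therefore invoke, or reproduce, Kirchberg's LLP/WEP tensor theorems at this step rather than treat it as routine.
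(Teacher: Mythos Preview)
The paper does not prove this statement at all; it is recorded as a \textbf{Fact} with a citation to Kirchberg \cite{K}, so there is no proof in the paper to compare against. Your proposal is thus an attempt to supply an argument where the paper simply appeals to the literature.

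Your forward direction is correct and clean. Your reverse direction is also essentially correct in structure, but you misdiagnose where the difficulty lies. The descent step you flag as the ``crux'' and ``main obstacle''---passing from an isometric inclusion $C\otimes_{\max}C^*(\bb F_S)\hookrightarrow B(H)\otimes_{\max}C^*(\bb F_S)$ to an isometric inclusion $C\otimes_{\max}E\hookrightarrow B(H)\otimes_{\max}E$ for the quotient $E=C^*(\bb F_S)/J$---is in fact routine and does \emph{not} require the LLP. By exactness of the maximal tensor product, it reduces to showing that $(C\otimes_{\max}C^*(\bb F_S))\cap(B(H)\otimes_{\max}J)=C\otimes_{\max}J$ inside $B(H)\otimes_{\max}C^*(\bb F_S)$, and this follows from an approximate-identity argument: if $(e_\lambda)$ is an approximate unit for $J$, then membership in either $B(H)\otimes_{\max}J$ or $C\otimes_{\max}J$ is characterized by $(1\otimes e_\lambda)y\to y$, and the isometric middle inclusion transfers this.

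The genuinely deep input---the LLP of $C^*(\bb F_\infty)$, equivalently Kirchberg's theorem that $(B(H),C^*(\bb F_\infty))$ is a nuclear pair---is something you already invoke as an external hypothesis in \emph{both} directions; it is not needed a second time for the descent. So your instinct that LLP is the heart of the matter is right, but it enters only through the black-box nuclear-pair result for $B(H)$, not through any lifting in your diagram chase.
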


Recall also that a \cstar-algebra $C$ has the \textbf{QWEP property} if it is a quotient of a \cstar-algebra with the WEP.

The \cstar-algebra $C$ has the \textbf{lifting property (LP)} if:  given any ucp map $\Phi:C\to D/J$, where $J$ is an ideal of some \cstar-algebra $D$, there is a ucp map $\Psi:C\to D$ such that $\Phi=\pi\circ \Psi$, where $\pi:D\to D/J$ is the canonical quotient map.  
% If we only require ucp lifts of $\Phi|E$ for each finite-dimensional operator subsystem $E$ of $C$, then we say that $C$ has the (a priori weaker) \textbf{local lifting property (LLP)}.  It is straightforward to check that the (L)LP is preserved under taking tensor products with  nuclear \cstar-algebras.

% Recalling the notion of a nuclear pair from the introduction, Kirchberg's fundamental results from \cite{K} on WEP and LLP are as follows:

% \begin{fact}\label{Kirchbergfact}

% \

% \begin{enumerate}
%     \item $C$ has the WEP if and only if $(C,C^*(\bb F_\infty))$ is a nuclear pair.
%     \item $C$ has the LLP if and only if $(C,B(\cal H))$ is a nuclear pair.
%     \item If $C$ has the WEP and $D$ has the LLP, then $(C,D)$ is a nuclear pair.
% \end{enumerate}
% \end{fact}

% The following lemma is easy, but will prove useful later:

% \begin{lem}\label{swap}
% Suppose that $C$, $D$, and $N$ are \cstar-algebras with $N$ nuclear.  Then the pair $(C,D\otimes N)$ is nuclear if and only if the pair $(C\otimes N,D)$ is nuclear.
% \end{lem}

We will need the following recent characterization of the LP due to Pisier \cite{Pisier}:

\begin{fact}\label{Pisierfact}
$D$ has the LP if and only if:  for any family $(C_i)_{i\in I}$ of \cstar-algebras and any ultrafilter $\u$ on $I$, the canonical map $(\prod_\u C_i)\otimes_{\max}D\to \prod_\u (C_i\otimes_{\max}D)$ is isometric.
\end{fact}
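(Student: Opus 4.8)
The plan is to prove both implications by reducing everything to a single computation with an exact sequence of maximal tensor products. Writing the \cstar-ultraproduct as a quotient, $\prod_\u C_i = \ell^\infty(C_i)/c_\u(C_i)$ where $c_\u(C_i)=\{(x_i)_i : \lim_\u\|x_i\|=0\}$, I would first invoke the exactness of the maximal tensor product with respect to ideals: for an ideal $J$ of a \cstar-algebra $B$ the sequence $0\to J\otimes_{\max}D\to B\otimes_{\max}D\to(B/J)\otimes_{\max}D\to 0$ is exact. Applied with $B=\ell^\infty(C_i)$ and $J=c_\u(C_i)$, this identifies $(\prod_\u C_i)\otimes_{\max}D$ with $(\ell^\infty(C_i)\otimes_{\max}D)/(c_\u(C_i)\otimes_{\max}D)$. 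The maps $\mathrm{ev}_i\otimes\mathrm{id}_D$ assemble into a $*$-homomorphism into $\prod_\u(C_i\otimes_{\max}D)$ that kills $c_\u(C_i)\otimes_{\max}D$ and induces precisely the canonical map $q$ in the statement. Since $q$ is a $*$-homomorphism between \cstar-algebras, it is isometric if and only if it is injective, which by the quotient description amounts to the distance estimate $\mathrm{dist}\big(x,\,c_\u(C_i)\otimes_{\max}D\big)\le\lim_\u\|(\mathrm{ev}_i\otimes\mathrm{id}_D)(x)\|$ for every $x\in\ell^\infty(C_i)\otimes_{\max}D$ (the reverse inequality being automatic since $q$ is contractive).

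For the forward direction I would bring in the lifting property through the description of the maximal norm by commuting pairs. A norming functional for the left-hand side is a state $\phi$ on $(\prod_\u C_i)\otimes_{\max}D$, whose GNS data is a pair of representations of $\prod_\u C_i$ and of $D$ on a common Hilbert space with commuting ranges; restricting this gives a ucp map from $D$ into the commutant of the image of $\prod_\u C_i$. The point is that this commutant lives over the quotient $\ell^\infty(C_i)\twoheadrightarrow\prod_\u C_i$, so the LP of $D$ lets me lift that ucp map to one commuting with a representation at the index level. Gluing the resulting index-level commuting pairs over $i$ (on a direct sum of the local Hilbert spaces) produces states on the $C_i\otimes_{\max}D$ whose $\u$-limit recovers $\phi$; this yields $\lim_\u\|(\mathrm{ev}_i\otimes\mathrm{id}_D)(x)\|\ge|\phi(\bar x)|$, and taking the supremum over $\phi$ gives the required distance bound, hence the isometry.

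For the converse I would run the same correspondence backwards. The content is that an isometry as in the statement lets any state-level coupling of $D$ with an ultraproduct $\prod_\u C_i$ be resolved coordinatewise over the index algebra $\ell^\infty(C_i)$, so that slicing a resolved coupling through $D$ produces, from a ucp map $D\to\prod_\u C_i$, a ucp lift $D\to\ell^\infty(C_i)$. To recover the full LP, namely lifting along an arbitrary quotient $B\to B/J$, I would combine this with the standard observation that a ucp lifting problem can be tested by mapping it into quotients of ultraproduct type; this reduction, rather than the isometry itself, is the subtle ingredient of the converse.

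The main obstacle, and the step I would expect to absorb most of the technical work, is making the dictionary between the maximal tensor norm and commuting ucp/representation pairs fully precise and, crucially, ensuring that the map produced by the lifting property is simultaneously completely positive and exactly commuting with the ambient representation after passing to the $\u$-limit. Keeping complete positivity and commutation under control through the gluing over $i$ and the ultrafilter limit, together with the exactness bookkeeping for the infinite product $\ell^\infty(C_i)$, is where the argument is delicate; the purely algebraic reduction in the first paragraph is routine by comparison.
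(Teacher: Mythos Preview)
The paper does not prove this statement; it is recorded as a Fact and attributed to Pisier \cite{Pisier}, so there is no ``paper's own proof'' to compare against.  That said, let me comment on your sketch on its merits.

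Your first paragraph --- the exactness of $\otimes_{\max}$ with respect to the ideal $c_\u(C_i)\lhd\ell^\infty(C_i)$ and the resulting identification of the canonical map $q$ as a $*$-homomorphism out of the quotient $(\ell^\infty(C_i)\otimes_{\max}D)/(c_\u(C_i)\otimes_{\max}D)$ --- is correct and is indeed the right reduction.

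The gap is in the forward direction.  You write that the GNS data of a state on $(\prod_\u C_i)\otimes_{\max}D$ gives a ucp map from $D$ into the commutant $\pi(\prod_\u C_i)'$, and that ``this commutant lives over the quotient $\ell^\infty(C_i)\twoheadrightarrow\prod_\u C_i$, so the LP of $D$ lets me lift that ucp map to one commuting with a representation at the index level.''  This is not how the LP applies: the LP lifts a ucp map $D\to B/J$ through a \emph{quotient of the target}, but $\pi(\prod_\u C_i)'$ is not presented as a quotient of any algebra built from the $C_i$'s coordinatewise.  Pulling $\pi$ back along $\ell^\infty(C_i)\to\prod_\u C_i$ gives a representation $\tilde\pi$ of $\ell^\infty(C_i)$ with the \emph{same} commutant, so there is nothing to lift there.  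What actually has to be shown (and this is where Pisier uses the LP in a nontrivial way) is that the map $\ell^\infty(C_i)\otimes_{\max}D\to\prod_i(C_i\otimes_{\max}D)$ is isometric; the LP enters by lifting a ucp map $D\to B(H)/K(H)$ (or an analogous Calkin-type quotient arising from the product) rather than by lifting into a commutant.  Your description does not set up a genuine quotient map for the LP to act on.

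For the converse you correctly flag that the reduction of an arbitrary lifting problem $D\to B/J$ to an ultraproduct-type quotient is the real content, but you do not indicate how to carry it out; the phrase ``slicing a resolved coupling through $D$'' does not by itself produce a ucp lift into $\ell^\infty(C_i)$, since slicing a state need not yield a multiplicative or even a ucp map with the right range.  Pisier's argument here is genuinely delicate and is not recoverable from the outline you give.
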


\subsection{Ultraproduct states}

 Throughout this paper, $\u$ denotes a nonprincipal ultrafilter on some index set $I$.  Given a family of \cstar-algebras $(C_i)_{i\in I}$, we define their \textbf{ultraproduct with respect to $\u$} to be the quotient of the \cstar-algebra $\prod_{i\in I} C_i$ of all uniformly bounded sequences by the ideal of elements $(A_i)_{i\in I}$ for which $\lim_\u \|A_i\|=0$.  It is well-known that $\prod_\u C_i$ is a \cstar-algebra once again.  Given $(A_i)_{i\in I}\in \prod_{i\in I} C_i$, we denote its coset in $\prod_\u C_i$ by $(A_i)_\u$.  When $C_i=C$ for all $i\in I$, we refer to the ultraproduct as the \textbf{ultrapower of $C$ with respect to $\u$} and denote it by $C^\u$.  There is a canonical embedding of $C$ into $C^\u$, called the \textbf{diagonal embedding}, given by mapping $c\in C$ to the coset of the sequence indexed by $I$ with constant value $c$.
 
 If $\phi_i\in S(C_i)$ is a state on $C_i$ for each $i\in I$, we let the \textbf{ultraproduct state} (also known in the literature as the \textbf{limit state}) $\phi:=(\phi_i)_\u$ on $\prod_\u C_i$ be defined by $\phi((A_i)_\u):=\lim_\u \phi_i(A_i)$.
 
We will need the following lemma later in this paper:

\begin{lem}\label{extension}
For any ultraproduct $\prod_\u C_i$ of \cstar-algebras, the set of ultraproduct states is weak*-dense in the set of all states on $\prod_\u C_i$.
\end{lem}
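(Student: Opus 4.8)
The plan is to argue by a Hahn--Banach separation, after first noting that the ultraproduct states form a \emph{convex} subset $K$ of the weak*-compact state space $S(\prod_\u C_i)$. Indeed, if $(\phi_i)_\u$ and $(\psi_i)_\u$ are ultraproduct states and $t\in[0,1]$, then $t\phi_i+(1-t)\psi_i\in S(C_i)$ for every $i$, and $(t\phi_i+(1-t)\psi_i)_\u = t(\phi_i)_\u+(1-t)(\psi_i)_\u$; hence $K$ is convex. I would then suppose, toward a contradiction, that $K$ is not weak*-dense, so that its weak*-closure $\overline K$ is a proper closed convex subset of $S(\prod_\u C_i)$, and fix some $\psi_0\in S(\prod_\u C_i)\setminus\overline K$.

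Since the weak*-continuous linear functionals on $(\prod_\u C_i)^*$ are exactly the evaluations at elements of $\prod_\u C_i$, the separation theorem supplies $a\in\prod_\u C_i$ and $\alpha\in\bR$ strictly separating $\psi_0$ from $\overline K$. Using that $\operatorname{Re}\phi(a)=\phi(\tfrac{a+a^*}{2})$ for every state $\phi$, I would replace $a$ by its real part and assume $a=a^*$, so that
\[
\phi(a)\le\alpha<\psi_0(a)\qquad\text{for all ultraproduct states }\phi.
\]
The whole problem is thereby reduced to a single self-adjoint element: it suffices to show $\sup_{\phi\in K}\phi(a)\ge\psi_0(a)$, as this contradicts the displayed inequality. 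Because $\psi_0$ is a state, $\psi_0(a)\le\max\sigma(a)$, so it is in fact enough to prove $\sup_{\phi\in K}\phi(a)=\max\sigma(a)$.

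The technical heart of the argument, and the step I expect to be the main obstacle, is the spectral identity $\max\sigma(a)=\lim_\u\max\sigma(A_i)$ for a self-adjoint representative $a=(A_i)_\u$ with each $A_i=A_i^*$. The inequality $\le$ is immediate from $A_i\le(\max\sigma(A_i))1$. For $\ge$, I would set $\mu:=\lim_\u\max\sigma(A_i)$ and $T_i:=(\max\sigma(A_i))1-A_i\ge0$; each $T_i$ is singular, so functional calculus with a bump function supported near $0$ yields norm-one $b_i$ with $\|T_ib_i\|\le 1/n$, and letting $c:=\mu\cdot1-a$, which equals $(T_i)_\u$ since $\lim_\u|\mu-\max\sigma(A_i)|=0$, the norm-one element $b:=(b_i)_\u$ satisfies $\|cb\|=\lim_\u\|T_ib_i\|\le 1/n$ for arbitrary $n$, so $c$ is not bounded below and hence $\mu\in\sigma(a)$. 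Granting this identity, I would finish as follows: for each $\varepsilon>0$ choose, for every $i$, a state $\phi_i\in S(C_i)$ with $\phi_i(A_i)\ge\max\sigma(A_i)-\varepsilon$ (such states exist since $\sup_{\phi_i}\phi_i(A_i)=\max\sigma(A_i)$ for self-adjoint $A_i$). Then $(\phi_i)_\u\in K$ and $(\phi_i)_\u(a)=\lim_\u\phi_i(A_i)\ge\mu-\varepsilon$, so letting $\varepsilon\to0$ gives $\sup_{\phi\in K}\phi(a)\ge\mu=\max\sigma(a)$, completing the contradiction and hence the proof.
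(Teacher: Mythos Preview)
Your proof is correct and follows the same Hahn--Banach separation strategy as the paper: reduce to a single self-adjoint element and then exhibit ultraproduct states achieving values at least as large as any state can. The paper's version is terser (it jumps directly to a norm-one self-adjoint $A$ and states $\psi_i$ with $\psi_i(A_i)=\|A_i\|$), while you are more careful in handling the separating hyperplane and in working with $\max\sigma$ rather than the norm.

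One remark on efficiency: the ``hard direction'' of your spectral identity, namely showing $\mu:=\lim_\u\max\sigma(A_i)\in\sigma(a)$ via approximate zero divisors, is not actually needed. You want $\sup_{\phi\in K}\phi(a)\ge\max\sigma(a)$, and you already establish $\sup_{\phi\in K}\phi(a)\ge\mu$ by choosing $\phi_i$ with $\phi_i(A_i)\ge\max\sigma(A_i)-\varepsilon$. The \emph{easy} inequality $\mu\ge\max\sigma(a)$ (immediate from $A_i\le\max\sigma(A_i)\cdot 1$) then finishes the argument. So the functional-calculus paragraph producing the elements $b_i$ can be dropped entirely.
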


\begin{proof}
Suppose, towards a contradiction, that this is not the case.  Let $X$ denote the weak*-closure of the set of ultraproduct states in $S(\prod_\u C_i)$. % and take $\phi\in S(\prod_\u C_i)\setminus X$.  By 
If $X \neq S(\prod_\u C_i)$, then by the Hahn-Banach Separation Theorem, there is a self-adjoint element $A\in \prod_\u C_i$ with $\|A\|=1$ such that $\psi(A)=0$ for all $\psi\in X$. % and yet $\phi(A)>1$.  
Write $A=(A_i)_\u$ with each $A_i$ a self-adjoint element of $C_i$.  For each $i\in I$, take $\psi_i\in S(C_i)$ such that $\psi_i(A_i)=\|A_i\|$.  Setting $\psi:=(\psi_i)_\u\in X$, we have that $1=\|A\|=\lim_\u \|A_i\|=\psi(A)$, a contradiction.
% Set $r:=\|a\|$ and note that $r>1$.  Take self-adjoint elements $a_n\in A$ such that $\|a_n\|=r$ and such that $a=(a_n)^\bullet$.  Take states $\phi_n$ on $A$ such that $|\phi_n(a_n)|=r$.  We then have that $(\phi_n)^\bullet (a)=r$, contradicting the fact that $(\phi_n)^\bullet\in X$.  NOT QUITE RIGHT YET.
\end{proof}

\subsection{POVMs in \cstar-algebras}

Given a \cstar-algebra $C$, a \textbf{positive operator-valued measure} or \textbf{POVM} of length $n$ in $C$ is a finite collection $A_1,\ldots,A_n$ of positive elements in $C$ such that $A_1+\cdots+A_n=I$.  

It is well-known that there is a 1-1 correspondence between POVMs in $A$ of length $n$ and ucp maps $\Phi:\bb C^n\to A$, where $A_1,\ldots,A_n$ as above corresponds to the ucp map $\Phi$ given by $\Phi(e_i)=A_i$ for $i=1,\ldots,n$, where $e_1,\ldots,e_n$ is the standard basis for $\bb C^n$.

% We will need to know the following fact, which is probably well-known but for which we were unable to find a precise reference:

% \begin{lem}
% For all $n\geq 2$, $\epsilon>0$, and any set $A_1,\ldots,A_n$ of positive elements from a \cstar-algebra $C$ for which $\|\sum_{i=1}^n A_i-I\|<\epsilon$, there is a POVM $B_1,\ldots,B_n$ of length $n$ in $C$ for which $\|A_i-B_i\|<\epsilon$ for all $i=1,\ldots,n$.  
% \end{lem}

% \begin{proof}
% Suppose that $A_1,\ldots,A_n$ are positive elements from a \cstar-algebra $C$ for which $\|\sum_{i=1}^n A_i-I\|<\epsilon$.  Let $m\leq n$ be maximal such that $A_1+\ldots+A_m\leq I$.  Set $B_i:=A_i$ for $i=1,\ldots,m$, $B_{m+1}:=I-(A_1+\cdots+A_{m})$, and $B_i=0$ for $i=m+1,\ldots,n$.  Then $B_1,\ldots,B_n$ is the desired POVM from $C$.
% % Let $\eta(x_1,\ldots,x_k)$ denote the formula $$\|\sum_{i=1}^k x_i^*x_i-1\|.$$
% % It is clear that the zeroset of this formula is $Z$.  Suppose now that there is a sequence $(T_1,\ldots,T_k)$ from a \cstar-algebra $C$ such that $\eta(T_1,\ldots,T_k)^C<\epsilon$.  Without loss of generality, suppose that $\sum_{i=1}^k T_i^*T_i\leq 1$ and set $Y=\sqrt{1-\sum_{i=1}^k T_i^*T_i}$, so $\|Y\|<\sqrt{\epsilon}$.  For $i=1,\ldots,k-1$, let $T_i'=T_i$ while $T_k'=U_k\sqrt{T_k^*T_k+Y^2}$, where $T_k=U_k|X_k|$ is the polar decomposition of $X_k$.  Then 
% % $$\|X_k'-X_k\|=\|U_k\sqrt{|X_k|^2+Y^2}-U_k\sqrt{|X_k|^2}\|<\epsilon$$ and clearly $\eta(T_1',\ldots,T_k')^C=0$.
% \end{proof}

% The following corollary is immediate from the previous lemma:

\begin{lem}\label{liftingPOVMs}
Suppose that $A_1,\ldots,A_n$ is a POVM in the \cstar-algebra $C$ and $\pi:D\to C$ is a surjective *-homomorphism.  Then there is a POVM $B_1,\ldots,B_n$ in $D$ such that $\pi(B_i)=A_i$ for $i=1,\ldots,n$.
\end{lem}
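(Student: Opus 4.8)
The plan is to first produce arbitrary positive lifts of the $A_i$ and then renormalize them using a correction from the kernel so that the sum becomes exactly the identity. Throughout I assume $D$ is unital (as it must be for the conclusion to make sense), and I note that since $\pi$ is surjective onto the unital algebra $C$, we have $\pi(I_D)=I_C$.

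First I would lift each $A_i$ to a positive element of $D$. Since a surjective $*$-homomorphism carries the positive cone of $D$ onto that of $C$, one can choose $B_i'\in D$ with $B_i'\geq 0$ and $\pi(B_i')=A_i$: concretely, pick any $d_i\in D$ with $\pi(d_i)=A_i^{1/2}$ and set $B_i':=d_i^*d_i$, so that $\pi(B_i')=\pi(d_i)^*\pi(d_i)=A_i^{1/2}A_i^{1/2}=A_i$. Put $S:=\sum_{i=1}^n B_i'\geq 0$; then $\pi(S)=\sum_i A_i=I$, so $S$ is invertible modulo $J:=\ker\pi$, although $S$ itself need not be invertible in $D$.

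The key step is to make the sum genuinely invertible by a positive perturbation lying inside $J$. Using continuous functional calculus on the self-adjoint element $S$, set $j:=(\tfrac12 I-S)_+\geq 0$. Since functional calculus commutes with $\pi$, we have $\pi(j)=(\tfrac12 I-\pi(S))_+=(\tfrac12 I-I)_+=0$, so $j\in J$. Replacing $B_1'$ by $B_1'+j$ and leaving the other lifts unchanged, the lifts remain positive with the same images under $\pi$, while their new sum is $\tilde S:=S+j=\max(S,\tfrac12 I)\geq \tfrac12 I$, which is positive and invertible in $D$.

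Finally I would renormalize symmetrically. Writing $\tilde B_1:=B_1'+j$ and $\tilde B_i:=B_i'$ for $i>1$, define $B_i:=\tilde S^{-1/2}\tilde B_i\tilde S^{-1/2}$. Each $B_i$ is positive, and $\sum_i B_i=\tilde S^{-1/2}\bigl(\sum_i\tilde B_i\bigr)\tilde S^{-1/2}=\tilde S^{-1/2}\tilde S\tilde S^{-1/2}=I$, so the $B_i$ form a POVM in $D$; and since $\pi(\tilde S)=I$, we get $\pi(B_i)=\pi(\tilde S)^{-1/2}\pi(\tilde B_i)\pi(\tilde S)^{-1/2}=A_i$, as required. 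The only real obstacle is the tension between preserving positivity and forcing the sum to equal the identity exactly; this is resolved by observing that surjectivity permits a positive correction inside the kernel rendering the sum invertible, after which conjugation by $\tilde S^{-1/2}$ is harmless on both the positivity and the image fronts.
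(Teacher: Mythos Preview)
Your argument is correct and genuinely different from the paper's. The paper exploits the well-known correspondence between POVMs of length $n$ in a \cstar-algebra and ucp maps out of $\bb C^n$: the POVM $A_1,\ldots,A_n$ corresponds to a ucp map $\Phi:\bb C^n\to C$, and since $\bb C^n$ has the lifting property, $\Phi$ lifts through $\pi$ to a ucp map $\Psi:\bb C^n\to D$; the images $\Psi(e_i)$ then form the desired POVM. Your approach instead builds the lift by hand: arbitrary positive lifts, a positive kernel correction $(\tfrac12 I-S)_+$ to make the sum invertible, and a symmetric renormalization by $\tilde S^{-1/2}$. The paper's route is a one-line application of existing machinery but imports the LP of $\bb C^n$ as a black box; your route is self-contained, explicit, and makes no appeal to the lifting property, at the cost of a few more lines and the (mild) assumption that $D$ is unital, which is in any case forced by the conclusion.
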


\begin{proof}
Let $\Phi:\bb C^n\to C$ be the ucp map given by $\Phi(e_i)=A_i$ for $i=1,\ldots,n$. Since $\bb C^n$ has the LP, there is a ucp map $\Psi:\bb C^n\to D$ such that $\Phi=\pi\circ \Psi$.  Setting $B_i:=\Psi(e_i)$, we have that $B_1,\ldots,B_n$ is the desired POVM in $D$.
\end{proof}

A special case of the previous lemma is the following:

\begin{prop}\label{definable}
Suppose that $(C_i)_{i\in I}$ is a family of \cstar-algebras and $\u$ is an ultrafilter on $I$.  Let $C:=\prod_\u C_i$.  Then for any POVM $A_1,\ldots,A_n$ of length $n$ from $C$, there are POVMs $A_{1,i},\ldots,A_{n,i}$ of length $n$ in $C_i$ for each $i\in I$ such that $A_a=(A_{a,i})_\u$ for all $a\in [n]$.
\end{prop}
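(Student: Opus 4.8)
The plan is to exhibit $C=\prod_\u C_i$ as a quotient of an ambient \cstar-algebra to which Lemma \ref{liftingPOVMs} applies directly. By the definition of the ultraproduct recalled above, $C$ is the quotient of $D:=\prod_{i\in I}C_i$ (the \cstar-algebra of uniformly bounded sequences) by the ideal of sequences that vanish along $\u$; write $\pi:D\to C$ for the canonical quotient \mbox{*-homomorphism}, so that $\pi$ is surjective and $\pi((B_i)_{i\in I})=(B_i)_\u$.

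First I would apply Lemma \ref{liftingPOVMs} to the POVM $A_1,\ldots,A_n$ in $C$ and the surjection $\pi$, obtaining a POVM $B_1,\ldots,B_n$ in $D$ with $\pi(B_a)=A_a$ for each $a\in [n]$. Writing each lifted element componentwise as $B_a=(B_{a,i})_{i\in I}$ with $B_{a,i}\in C_i$, it then remains only to check that the POVM conditions on $B_1,\ldots,B_n$ pass to the individual coordinates.

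This last step is where the only (routine) care is needed: in the product $D=\prod_{i\in I}C_i$, an element is positive precisely when each of its coordinates is positive, and the unit of $D$ is the sequence $(I_{C_i})_{i\in I}$ of local units. Hence $B_a\geq 0$ forces $B_{a,i}\geq 0$ for every $i$, and $\sum_a B_a=I_D$ forces $\sum_a B_{a,i}=I_{C_i}$ for every $i$. Therefore, setting $A_{a,i}:=B_{a,i}$, the tuple $A_{1,i},\ldots,A_{n,i}$ is a POVM of length $n$ in $C_i$ for each fixed $i$.

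Finally I would read off the conclusion from the definition of $\pi$: for each $a$ we have $A_a=\pi(B_a)=(B_{a,i})_\u=(A_{a,i})_\u$, which is exactly the claim. I expect no serious obstacle here—the entire substance is packaged into Lemma \ref{liftingPOVMs} (itself a consequence of the lifting property of $\bb C^n$), and the only thing to verify by hand is the coordinatewise characterization of positivity and of the unit in an $\ell^\infty$-product of \cstar-algebras.
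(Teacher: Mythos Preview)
Your proposal is correct and is precisely the paper's argument: the paper introduces this proposition as ``a special case of the previous lemma,'' meaning Lemma~\ref{liftingPOVMs} applied to the quotient map $\prod_{i\in I}C_i\to\prod_\u C_i$, and you have simply spelled out the routine verification that a POVM in the $\ell^\infty$-product is coordinatewise a POVM.
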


In model theoretic terms, the previous proposition says that the set of POVMs of length $n$ is definable in the theory of \cstar-algebras.
The previous proposition together with a standard ``compactness and contradiction'' argument yield the following corollary.  However, for our purposes in the last section, it behooves us to give a more direct and explicit proof.

\begin{cor}\label{almostnear}
For any $n\geq 1$, $\epsilon>0$, and sequence $A_1,\ldots,A_n$ of positive elements from a \cstar-algebra $C$ for which $\|\sum_{i=1}^n A_i-I\|<\frac{\epsilon}{2}$, there is a POVM $B_1,\ldots,B_n$ in $C$ with $\|A_i-B_i\|<\epsilon$ for all $i=1,\ldots,n$.
\end{cor}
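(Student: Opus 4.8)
The plan is to renormalize the near-POVM $A_1,\dots,A_n$ by a single explicit formula rather than through a compactness-and-contradiction argument, so that the resulting $B_i$ depend continuously (in fact, by a fixed algebraic expression) on the data $A_1,\dots,A_n$; this explicitness is what the last section needs. Write $S:=\sum_{i=1}^n A_i$. Since each $A_i\ge 0$ we have $S\ge 0$, and the hypothesis $\|S-I\|<\tfrac\epsilon2$ forces $\spec(S)\subseteq(1-\tfrac\epsilon2,\,1+\tfrac\epsilon2)$; in particular $S<(1+\tfrac\epsilon2)I$, and since $0\le A_i\le S$ (as $S-A_i=\sum_{j\ne i}A_j\ge0$) we get $\|A_i\|\le\|S\|<1+\tfrac\epsilon2$. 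The idea is to scale the $A_i$ down just enough that their sum drops below $I$, and then distribute the resulting positive defect equally. Concretely, set
\[
B_i:=\frac{1}{1+\tfrac\epsilon2}\left(A_i+\frac1n\Big((1+\tfrac\epsilon2)I-S\Big)\right),\qquad i=1,\dots,n.
\]
Each $B_i$ is positive, since $A_i\ge0$ and $(1+\tfrac\epsilon2)I-S\ge0$ by the bound on $S$; and $\sum_i B_i=\frac{1}{1+\epsilon/2}\big(S+((1+\tfrac\epsilon2)I-S)\big)=I$, so $B_1,\dots,B_n$ is genuinely a POVM.

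It then remains to estimate $\|A_i-B_i\|$. Writing $A_i':=\tfrac{1}{1+\epsilon/2}A_i$ and $D':=I-\tfrac{1}{1+\epsilon/2}S$, we have $B_i=A_i'+\tfrac1n D'$, so $\|A_i-B_i\|\le\|A_i-A_i'\|+\tfrac1n\|D'\|$. For the first term, $\|A_i-A_i'\|=\|A_i\|\cdot\frac{\epsilon/2}{1+\epsilon/2}<\tfrac\epsilon2$, using $\|A_i\|<1+\tfrac\epsilon2$. For the second, $D'=\frac{1}{1+\epsilon/2}\big((1+\tfrac\epsilon2)I-S\big)$ is positive, and $\|(1+\tfrac\epsilon2)I-S\|=(1+\tfrac\epsilon2)-\inf\spec(S)<\epsilon$ because $\inf\spec(S)>1-\tfrac\epsilon2$; hence $\|D'\|<\frac{\epsilon}{1+\epsilon/2}$. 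Combining these gives $\|A_i-B_i\|<\tfrac\epsilon2+\tfrac1n\cdot\frac{\epsilon}{1+\epsilon/2}$, which is $<\epsilon$ as soon as $n\ge 2$ (indeed $\tfrac1n\cdot\frac{\epsilon}{1+\epsilon/2}\le\tfrac12\cdot\frac{\epsilon}{1+\epsilon/2}<\tfrac\epsilon2$).

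The one place needing care, and the main obstacle to a single clean estimate, is that this crude triangle inequality is not sharp enough when $n=1$, where it only yields a bound near $\tfrac{3\epsilon}2$. That case is degenerate and is dispatched directly: for $n=1$ the formula collapses to $B_1=\frac{1}{1+\epsilon/2}\big(A_1+((1+\tfrac\epsilon2)I-A_1)\big)=I$, the unique length-$1$ POVM, and then $\|A_1-B_1\|=\|A_1-I\|<\tfrac\epsilon2<\epsilon$. I would favor this scaling-and-redistribution construction over the more familiar normalization $B_i:=S^{-1/2}A_iS^{-1/2}$: the latter requires $S$ to be invertible (hence $\epsilon<2$), and its natural norm estimate, which reduces to $\|A_i\|\cdot\big((1-\tfrac\epsilon2)^{-1}-1\big)$, only delivers $\|A_i-B_i\|<\epsilon$ for $\epsilon<\tfrac23$. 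By contrast, the construction above works verbatim for \emph{every} $\epsilon>0$ and uses nothing beyond the positivity of $S$ and of $(1+\tfrac\epsilon2)I-S$.
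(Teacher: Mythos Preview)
Your argument is correct. Both your proof and the paper's hinge on the same scaling $A_i\mapsto \tfrac{1}{1+\epsilon/2}A_i$ to push the sum below $I$, but they differ in how the resulting positive defect is absorbed: the paper dumps it entirely onto the last coordinate (taking $B_i=C_i$ for $i<n$ and $B_n=C_n+(I-\sum_j C_j)$), whereas you spread $\tfrac1n\bigl((1+\tfrac\epsilon2)I-S\bigr)$ equally over all $n$ coordinates. Your equal distribution earns a factor of $1/n$ on the defect term in the triangle-inequality estimate, which is exactly what makes $\|A_i-B_i\|<\epsilon$ go through for every $n\ge 2$ without any side hypothesis on $S$; the paper instead treats the case $\sum_i A_i\le I$ separately and only invokes the scaling under the additional assumption $\sum_i A_i\ge I$, and these two alternatives are not exhaustive for a general positive $S$ with $\|S-I\|<\tfrac\epsilon2$. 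So your single closed-form construction, together with the trivial $n=1$ check, is if anything a cleaner route to the stated bound; the paper's version is marginally simpler when its case hypotheses are met and has the minor feature that only one coordinate is perturbed after scaling.
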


\begin{proof}
First suppose that $\sum^n_{i = 1} A_i \leq I$.  In this case, the desired POVM can be obtained by letting $B_i = A_i$ for $i < n$ and $B_n = A_n + (I - \sum^n_{i = 1} A_i)$.  

Suppose, on the other hand, that $\sum^n_{i = 1} A_i \geq I$ and set $C_i = \frac{1}{1 + \frac{\epsilon}{2}} A_i$. We then have that $\| A_i - C_i\| \leq \frac{\epsilon}{2}$ and, by functional calculus, that $\sum^n_{i = 1} C_i\leq I$.  By the first case, there is a POVM $B_1,\ldots,B_n$ such that $\|B_i-C_i\|<\frac{\epsilon}{2}$ for each $i=1,\ldots,n$, whence $\|A_i-B_i\|<\epsilon$, as desired.
\end{proof} 

\section{Tsirelson pairs of \cstar-algebras}

We now precisely define the correlation sets mentioned in the introduction.

\begin{defn}
Given \cstar-algebras $C$ and $D$, we let $C_{\min}(C,D,k,n)$ (respectively $C_{\max}(C,D,k,n))$ denote the \emph{closure} of the set of correlations of the form \\$\phi(A^x_a\otimes B^y_b)$, where $A^1,\ldots,A^k$ are POVMs of length $n$ from $C$, $B^1,\ldots,B^k$ are POVMs of length $n$ from $D$, and $\phi$ is a state on $C\otimes D$ (respectively a state on $C\otimes_{\max} D$).
\end{defn}

\begin{remark}
In the preceding definition, we took closures as in many arguments, having closed sets of correlations is preferable.  It is not clear to us if taking the closure is necessary (although we suspect that it is in general).
\end{remark}

The following is well-known, but we include a proof for the sake of the reader.  For the proof, we recall that the sets $C_{qa}(k,n)$ and $C_{qc}(k,n)$ are closed, convex subsets of $[0,1]^{k^2n^2}$ for all $(k,n)$.

\begin{lem}\label{basic}
For any pair of \cstar-algebras $C$ and $D$ and any $(k,n)$, we have:
\begin{enumerate}
    \item $C_{\min}(C,D,k,n)\subseteq C_{\max}(C,D,k,n)$.
    \item $C_{\min}(C,D,k,n)\subseteq C_{qa}(k,n)$.
    \item $C_{\max}(k,n)\subseteq C_{qc}(k,n)$.
    \item If $C'$ and $D'$ are \cstar-algebras for which $C\subseteq C'$ and $D\subseteq D'$, then $$C_{\min}(C,D,k,n)\subseteq C_{\min}(C',D',k,n).$$
\end{enumerate}
\end{lem}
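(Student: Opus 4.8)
The plan is to prove the four containments in Lemma~\ref{basic} by directly manipulating states and POVMs, using the universal properties of the minimal and maximal tensor products together with the characterizations of $C_{qa}$ and $C_{qc}$ recalled in the introduction.

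\textbf{Part (1).} Here I would exploit the fact that the identity map on the algebraic tensor product $C\odot D$ extends to a canonical surjective $*$-homomorphism $\pi\colon C\otimes_{\max}D\to C\otimes D$ (the minimal norm is the smallest \cstar-norm, so it is dominated by the maximal norm). Given a state $\phi$ on $C\otimes D$, the composition $\phi\circ\pi$ is a state on $C\otimes_{\max}D$, and since $\pi$ fixes elementary tensors we have $\phi(A^x_a\otimes B^y_b)=(\phi\circ\pi)(A^x_a\otimes B^y_b)$. Thus every generating correlation for $C_{\min}$ is already a generating correlation for $C_{\max}$, using the \emph{same} POVMs; taking closures gives the inclusion.

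\textbf{Parts (2) and (3).} These reduce to the characterizations of $C_{qa}(k,n)$ and $C_{qc}(k,n)$ via $C^*(\bb F(k,n))$ stated in the introduction, combined with the universal property of that \cstar-algebra. The key observation is that the POVMs $A^1,\dots,A^k$ in $C$ (each of length $n$, with entries positive elements summing to $I$) assemble into a unital $*$-homomorphism $\rho_C\colon C^*(\bb F(k,n))\to C$, since the generators of $\bb F(k,n)$ together with the POVM relations are precisely encoded by the universal property; similarly one gets $\rho_D\colon C^*(\bb F(k,n))\to D$. For part (3), a state $\phi$ on $C\otimes_{\max}D$ pulls back along $\rho_C\otimes_{\max}\rho_D$ to a state $\psi$ on $C^*(\bb F(k,n))\otimes_{\max}C^*(\bb F(k,n))$ realizing the same correlation, witnessing membership in $C_{qc}(k,n)$; since $C_{qc}(k,n)$ is closed, the closure $C_{\max}$ lands inside it. (I note in passing that the statement in the excerpt writes $C_{\max}(k,n)$ rather than $C_{\max}(C,D,k,n)$, which appears to be a typo that I would silently correct.) Part (2) is the analogous argument with the minimal tensor product: functoriality of $\otimes$ gives $\rho_C\otimes\rho_D$, the state pulls back to one on $C^*(\bb F(k,n))\otimes C^*(\bb F(k,n))$, and membership in the closed set $C_{qa}(k,n)$ follows.

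\textbf{Part (4).} Given the inclusions $C\subseteq C'$ and $D\subseteq D'$, the main point is that POVMs in $C$ remain POVMs in $C'$ (positivity and the relation $\sum A_a^x=I$ are preserved under inclusion), and that the minimal tensor product is functorial for injective $*$-homomorphisms, so $C\otimes D\hookrightarrow C'\otimes D'$ isometrically. A state $\phi$ on $C\otimes D$ then extends, by the Hahn--Banach theorem together with positivity (a norm-one positive functional on a \cstar-subalgebra extends to a state on the larger algebra), to a state $\phi'$ on $C'\otimes D'$ restricting to $\phi$; this $\phi'$ realizes the same correlation using the same POVMs now viewed in $C'$ and $D'$. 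Taking closures yields the inclusion.

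\textbf{Main obstacle.} The only genuinely delicate point is the functoriality of the minimal tensor product used in Part (4): one must be sure that the inclusion $C\otimes D\hookrightarrow C'\otimes D'$ is \emph{isometric} (this is a standard property of the minimal norm but is precisely where it differs from the maximal norm, which need not be injective). Everything else is a routine application of universal properties and the closedness of $C_{qa}$ and $C_{qc}$; the correlations are preserved \emph{on the nose} at the level of generators, so the only work is in extending states and checking that the relevant tensor-product maps exist and behave isometrically.
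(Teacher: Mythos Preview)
Your arguments for (1) and (4) are correct and match the paper's proof essentially verbatim.

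For (2) and (3), however, there is a genuine gap. You claim that $k$ POVMs $A^1,\ldots,A^k$ of length $n$ in $C$ ``assemble into a unital $*$-homomorphism $\rho_C\colon C^*(\bb F(k,n))\to C$.'' This is false in general. The group $\bb F(k,n)$ is the free product of $k$ copies of $\bb Z_n$, so $C^*(\bb F(k,n))\cong \underbrace{\bb C^n * \cdots * \bb C^n}_{k}$, and its universal property classifies $k$-tuples of \emph{PVMs} (mutually orthogonal projections summing to $I$), not arbitrary POVMs: a $*$-homomorphism must send the spectral projections of the generating order-$n$ unitaries to projections. A general POVM of length $n$ only gives a ucp map $\bb C^n\to C$, and ucp maps do not combine freely into a ucp map out of the \cstar-free product without further hypotheses (e.g.\ Boca's theorem, which targets $B(\cal H)$). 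So the pullback maps $\rho_C\otimes\rho_D$ and $\rho_C\otimes_{\max}\rho_D$ you invoke simply need not exist.

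The paper avoids this issue by arguing directly from the Hilbert-space definitions of $C_{qa}$ and $C_{qc}$ rather than from the $C^*(\bb F(k,n))$ reformulation. For (3) one applies the GNS construction to $\phi$ on $C\otimes_{\max}D$: the images of $A^x_a\otimes 1$ and $1\otimes B^y_b$ are commuting POVMs on the GNS space and the cyclic vector realizes the correlation, placing it in $C_{qc}(k,n)$ by definition. For (2) one realizes $C\otimes D\subseteq B(\cal H\otimes\cal K)$ via faithful representations, approximates an arbitrary state by convex combinations of vector states on $\cal H\otimes\cal K$, and uses that $C_{qa}(k,n)$ is closed and convex. Your route could be repaired by first dilating the POVMs to PVMs (Naimark) after passing to a Hilbert-space representation, but at that point you are essentially carrying out the paper's argument anyway.
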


\begin{proof}
(1) follows from the fact that any state on $C\otimes D$ induces a state on $C\otimes_{\max}D$ via the canonical surjection $C\otimes_{\max}D\to C\otimes D$.  To see (2), fix faithful representations $C\subseteq B(\cal H)$ and $D\subseteq B(\cal K)$, whence $C\otimes D\subseteq B(\cal H\otimes \cal K)$ is a faithful representation.  Now given any state $\phi$ on $C\otimes D$, POVMs $A^x$ and $B^y$ from $C$ and $D$ respectively, and $\epsilon>0$, there are positive real numbers $\lambda_1,\ldots,\lambda_m$ with $\sum_{j=1}^m\lambda_j=1$ and unit vectors $\psi_1,\ldots,\psi_m\in \cal H\otimes \cal K$ for which $$|\phi(A^x_a\otimes B^y_b)-\sum_{j=1}^m\lambda_j \langle (A^x_a\otimes B^y_b)\psi_j,\psi_j\rangle|<\epsilon$$ for all $x,y\in [k]$ and $a,b\in [n]$ (see, for example, \cite[Proposition B.5]{Fr}).  It remains to use the fact that $C_{qa}$ is a closed, convex set.

(3) is an immediate consequence of the GNS construction while (4) follows from the fact that $C\otimes D\subseteq C'\otimes D'$ and any state on $C\otimes D$ extends to a state on $C'\otimes D'$. 
\end{proof}

%Since closed sets are preferable in many arguments, we consider the closures of $C_{\min}(C,D,k,n)$ and $C_{\max}(C,D,k,n)$ in the following definition.

\begin{defn}
We say that a pair $(C,D)$ of \cstar-algebras is a \textbf{Tsirelson pair} if ${C_{\min}(C,D,k,n)}={C_{\max}(C,D,k,n)}$ for all $(k,n)$.
\end{defn}

Note that every nuclear pair is a Tsirelson pair.  Our next goal is to give a useful reformulation of the notion of Tsirelson pair.  We first need a preliminary lemma.  Recall that, for $n\in \bb N$, a \cstar-algebra $C$ is called \textbf{$n$-subhomogeneous} if all irreducible representations of $C$ have dimension at most $n$.  $C$ is called \textbf{subhomogeneous} if it is $n$-subhomogeneous for some $n\in \bb N$.  Recall also that subhomogeneous \cstar-algebras are nuclear.

\begin{lem}\label{matrix}
If $C$ is not subhomogeneous, then for every $m\in \bb N$, there is a ucp embedding of $M_m(\bb C)$ in $C$.
\end{lem}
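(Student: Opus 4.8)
The plan is to prove the contrapositive in spirit: I want to extract from the failure of subhomogeneity a growing supply of high-dimensional irreducible representations and use them to find matrix subalgebras. The key structural fact I would invoke is the standard theory of subhomogeneous \cstar-algebras: $C$ is $n$-subhomogeneous precisely when every irreducible representation has dimension at most $n$, so $C$ failing to be subhomogeneous means that for every $m$ there exists an irreducible representation $\pi:C\to B(\mathcal H)$ with $\dim \mathcal H\geq m$ (indeed of dimension at least $m$, possibly infinite). Fixing $m$, I would choose such a $\pi$ with $\dim \mathcal H\geq m$.

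\medskip

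First I would pass to a finite-dimensional picture. Since $\pi$ is irreducible, $\pi(C)$ is strongly dense in $B(\mathcal H)$, so by Kadison's transitivity theorem $\pi(C)$ actually contains the full algebra of operators that are ``finitely supported'' in the appropriate sense; more precisely, for any finite-dimensional subspace I can realize prescribed finite-rank operators exactly. Concretely, choosing an orthonormal set $\xi_1,\dots,\xi_m\in\mathcal H$ and letting $P$ be the orthogonal projection onto their span, transitivity gives me elements of $\pi(C)$, hence (lifting along $\pi$) elements of $C$, whose images under $\pi$ act on $P\mathcal H$ as the matrix units $e_{ij}$. This produces a unital completely positive, indeed a $*$-homomorphic, identification of $M_m(\bb C)$ with the compression $P\pi(C)P$ sitting inside $B(P\mathcal H)\cong M_m(\bb C)$.

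\medskip

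The cleanest way to package this as a ucp map into $C$, rather than into a quotient $\pi(C)$, is to compose with a completely positive section. The map $M_m(\bb C)\to B(\mathcal H)$ sending $e_{ij}\mapsto$ (the transitivity-produced operator) followed by compression by $P$ recovers the identity on $M_m(\bb C)$; the genuine content is producing the \emph{upward} map $\Phi:M_m(\bb C)\to C$ that is ucp with $\pi\circ\Phi$ equal to the inclusion $M_m(\bb C)\hookrightarrow B(\mathcal H)$ on $P\mathcal H$. Here I would use that $M_m(\bb C)$ has the lifting property (it is nuclear and finite-dimensional), exactly as in Lemma~\ref{liftingPOVMs}: the compression-and-inclusion map $M_m(\bb C)\to B(P\mathcal H)\cong P\pi(C)P$ is a ucp map into a quotient-like object, and ucp liftings along the surjection $C\to\pi(C)$ give the desired $\Phi:M_m(\bb C)\to C$. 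That $\Phi$ is an \emph{embedding} (i.e.\ isometric) follows because $\pi\circ\Phi$ is injective: any element of $M_m(\bb C)$ whose image under $\Phi$ vanishes would vanish after applying $\pi$ and compressing by $P$, but that composite is the identity.

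\medskip

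The main obstacle I anticipate is ensuring that $\Phi$ is genuinely a ucp \emph{embedding} and not merely a ucp map, and doing so uniformly without assuming $\dim\mathcal H<\infty$. The subtlety is that transitivity produces preimages in $C$ with no control beyond their images under $\pi$, so injectivity of $\Phi$ has to be argued through the composite $\pi\circ\Phi$ as above rather than directly; the compression by $P$ is what tames the possibly-infinite-dimensional $\mathcal H$. A secondary technical point is confirming that the unital normalization survives the lift, which is handled by the fact that ucp liftings can be taken unital (replacing $D=C$ and noting $\Phi(I)=I$ is forced since $\pi\circ\Phi(I)=P$ and one arranges $\Phi$ unital). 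Once these are in place the statement follows for the given arbitrary $m$, and since $m$ was arbitrary the lemma holds.
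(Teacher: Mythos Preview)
Your overall strategy---use non-subhomogeneity to find an irreducible representation of dimension at least $m$, invoke Kadison transitivity, and then use the lifting property of $M_m(\bb C)$ to produce a ucp section---is exactly the paper's approach. The paper packages it more cleanly: it considers the single ucp map $T:C\to M_m(\bb C)$, $T(c)=p\pi(c)p$, observes via Kadison transitivity that $T$ restricted to its multiplicative domain is a \emph{surjective $*$-homomorphism} onto $M_m(\bb C)$, and then applies the LP of $M_m(\bb C)$ to that $*$-homomorphism to obtain a ucp left inverse $\Phi:M_m(\bb C)\to C$ with $T\circ\Phi=\id$, which is automatically isometric.

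Where your write-up has a genuine gap is the lifting step. You propose to lift ``along the surjection $C\to\pi(C)$,'' but the map you want to lift lands in the corner $P\pi(C)P$, not in $\pi(C)$; that corner is neither a unital subalgebra of $\pi(C)$ nor even obviously contained in $\pi(C)$ (since $P$ need not lie in $\pi(C)$). The LP lets you lift ucp maps through $*$-homomorphic quotients, so you need a genuine $*$-homomorphism onto $M_m(\bb C)$, and the compression $c\mapsto P\pi(c)P$ is not one. The multiplicative-domain trick is precisely what repairs this: using the self-adjoint form of Kadison transitivity, every self-adjoint $x\in B(P\cal H)$ is $P\pi(c)P$ for some self-adjoint $c$ with $\pi(c)(P\cal H)\subseteq P\cal H$, hence $c$ lies in the multiplicative domain of $T$; thus $T$ restricted there is a surjective $*$-homomorphism and LP applies. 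Your final remark that ``$\pi\circ\Phi(I)=P$ forces $\Phi(I)=I$'' is also off: if $\pi\circ\Phi(I)=P\neq I_{\cal H}$ then $\Phi$ is \emph{not} unital, which is another symptom of the same issue. Once you reroute through the multiplicative domain (or equivalently through $T$ rather than $\pi$), the argument goes through and coincides with the paper's.
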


\begin{proof}
Fix $m\in \bb N$.  By assumption, there is an irreducible
representation of $C$ on some Hilbert space $\cal H$ with $\dim(\cal H) > m$. Consider an orthogonal projection $p\in  B(\cal H)$
of rank $m$ and consider the ucp map $T: C \to M_m(\bb C)$ given by $T(c)=pcp$.
Then $T$ restricts to a surjective *-homomorphism on the
multiplicative domain for $T$ by the Kadison Transitivity theorem.
Since $M_m(\bb C)$ has the lifting property, we can find a ucp left inverse
of $T$, establishing the lemma.
\end{proof}

\begin{prop}\label{TPreformulation}
The pair $(C,D)$ is a Tsirelson pair if and only if $C_{\max}(C,D,k,n)\subseteq C_{qa}(k,n)$ for all $(k,n)$.
\end{prop}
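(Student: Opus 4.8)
The plan is to prove the two implications separately, with the forward direction being immediate and essentially all of the content concentrated in the reverse direction. For the forward direction, if $(C,D)$ is a Tsirelson pair then by definition $C_{\max}(C,D,k,n) = C_{\min}(C,D,k,n)$ for every $(k,n)$, and Lemma \ref{basic}(2) gives $C_{\min}(C,D,k,n)\subseteq C_{qa}(k,n)$; chaining these yields the desired inclusion.

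For the reverse direction I would first dispose of a degenerate case. If either $C$ or $D$ is subhomogeneous, then it is nuclear, so $(C,D)$ is a nuclear pair and hence automatically a Tsirelson pair, and there is nothing to prove (the hypothesis is not even needed here). So I may assume that both $C$ and $D$ are non-subhomogeneous, which is precisely the situation in which Lemma \ref{matrix} applies. Note also that the reverse inclusion $C_{\min}(C,D,k,n)\subseteq C_{\max}(C,D,k,n)$ is always available from Lemma \ref{basic}(1), so only one inclusion requires work.

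The heart of the argument is the following claim, which I would isolate and prove independently of the standing hypothesis: if $C$ and $D$ are both non-subhomogeneous, then $C_{qa}(k,n)\subseteq C_{\min}(C,D,k,n)$ for every $(k,n)$. Since $C_{\min}(C,D,k,n)$ is closed and $C_{qa}(k,n)$ is the closure of the finite-dimensional quantum correlations, it suffices to realize an arbitrary finite-dimensional quantum correlation $q(a,b|x,y)=\langle (A^x_a\otimes B^y_b)\psi,\psi\rangle$ --- with $A^1,\dots,A^k$ POVMs in $M_d(\bb C)$, $B^1,\dots,B^k$ POVMs in $M_{d'}(\bb C)$, and $\psi$ a unit vector in $\bb C^d\otimes \bb C^{d'}$ --- as a minimal-tensor correlation coming from POVMs in $C$ and $D$. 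By the proof of Lemma \ref{matrix}, I would extract ucp embeddings $\iota_A:M_d(\bb C)\to C$ and $\iota_B:M_{d'}(\bb C)\to D$ together with the ucp compressions $T_A:C\to M_d(\bb C)$ and $T_B:D\to M_{d'}(\bb C)$ satisfying $T_A\circ\iota_A=\id$ and $T_B\circ\iota_B=\id$. Then $\iota_A(A^x)$ and $\iota_B(B^y)$ are POVMs in $C$ and $D$ respectively, and I would set $\phi:=\omega_\psi\circ(T_A\otimes T_B)$, where $\omega_\psi$ is the vector state on $M_d(\bb C)\otimes M_{d'}(\bb C)$ and $T_A\otimes T_B:C\otimes D\to M_d(\bb C)\otimes M_{d'}(\bb C)$ is the minimal-tensor-product map. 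This $\phi$ is a state on $C\otimes D$, and a short computation on elementary tensors, using $T_A\circ\iota_A=\id$ and $T_B\circ\iota_B=\id$, gives $\phi(\iota_A(A^x_a)\otimes\iota_B(B^y_b))=\omega_\psi(A^x_a\otimes B^y_b)=q(a,b|x,y)$, so $q\in C_{\min}(C,D,k,n)$, proving the claim.

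Granting the claim, the reverse direction closes by chaining inclusions: the standing hypothesis gives $C_{\max}(C,D,k,n)\subseteq C_{qa}(k,n)$, the claim gives $C_{qa}(k,n)\subseteq C_{\min}(C,D,k,n)$, and Lemma \ref{basic}(1) gives $C_{\min}(C,D,k,n)\subseteq C_{\max}(C,D,k,n)$, so all four sets coincide and $(C,D)$ is a Tsirelson pair. I expect the main obstacle to be the state-transfer step: one must verify that $T_A\otimes T_B$ is a well-defined ucp map on the \emph{minimal} tensor product (so that $\phi$ is genuinely a state on $C\otimes D$ and not merely on $C\otimes_{\max}D$), which rests on the fact that the minimal tensor product of ucp maps is ucp, and that the retraction identities $T_A\circ\iota_A=\id$, $T_B\circ\iota_B=\id$ are exactly what let the compression recover the original matrix correlation. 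A secondary point to check is that finite-dimensional quantum correlations are dense in $C_{qa}(k,n)$, which justifies the reduction to the finite-dimensional case.
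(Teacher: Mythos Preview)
Your proof is correct and follows essentially the same route as the paper: dispose of the subhomogeneous case via nuclearity, then use Lemma~\ref{matrix} to embed matrix algebras and conclude $C_{qa}(k,n)\subseteq C_{\min}(C,D,k,n)$, closing via Lemma~\ref{basic}(1). The paper compresses the state-transfer step into a single ``Consequently'' while you spell out the retraction $T_A\otimes T_B$ and the density of finite-dimensional correlations in $C_{qa}$; this is exactly the content the paper leaves implicit, and your concerns about it (ucp on the minimal tensor product, the retraction identities) are all standard and valid.
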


\begin{proof}
The forward direction is immediate from Lemma \ref{basic}(2).  For the converse direction, fix $(k,n)$ and suppose that $C_{\max}(C,D,k,n)\subseteq C_{qa}(k,n)$.  If either $C$ or $D$ are subhomogeneous, then $(C,D)$ is a nuclear pair and hence a Tsirelson pair.  Thus we may assume that neither $C$ nor $D$ are subhomogeneous.  In this case, by Lemma \ref{matrix}, for each $m\in \bb N$, there is a u.c.p. embedding $\Phi:M_m(\bb C)\otimes M_m(\bb C)\to C\otimes D$.  Consequently, $C_{qa}(k,n)\subseteq {C_{\min}(C,D,k,n)}$ for all $(k,n)$.  By Lemma \ref{basic}(1) and the assumption, we see that ${C_{\min}(C,D,k,n)}={C_{\max}(C,D,k,n)}=C_{qa}(k,n)$, and thus $(C,D)$ is a Tsirelson pair. 
\end{proof}

The proof of the previous proposition motivates the following definition:

\begin{defn}
The pair $(C,D)$ is said to be a \textbf{strong Tsirelson pair} if $${C_{\min}(C,D,k,n)}={C_{\max}(C,D,k,n)}=C_{qa}(k,n)$$ for all $(k,n)$.
\end{defn}

\begin{cor}\label{strongpair}
Given a pair $(C,D)$ of \cstar-algebras, exactly one of the following occurs:
\begin{itemize}
    \item $(C,D)$ is not a Tsirelson pair.
    \item One of $C$ or $D$ is subhomogeneous, in which case $(C,D)$ is a nuclear pair (and thus a Tsirelson pair), but not a strong Tsirelson pair.
    \item $(C,D)$ is a strong Tsirelson pair.
\end{itemize}
\end{cor}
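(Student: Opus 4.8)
The plan is to show that the three listed alternatives are mutually exclusive and jointly exhaustive, drawing everything from Proposition \ref{TPreformulation} and its proof together with Lemma \ref{basic}. First I would dispose of the trichotomy being a genuine partition by reading off the logical structure: the three cases are organized by first asking whether $(C,D)$ is a Tsirelson pair, and, if it is, whether one of $C$ or $D$ is subhomogeneous. Since ``not a Tsirelson pair'' is the negation of ``Tsirelson pair,'' the first bullet is disjoint from the other two, so the real content is to verify that \emph{among Tsirelson pairs}, the subhomogeneous case and the strong case are disjoint and exhaustive.

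The key observation is buried in the proof of Proposition \ref{TPreformulation}: there, one shows that if $(C,D)$ is a Tsirelson pair and \emph{neither} $C$ nor $D$ is subhomogeneous, then in fact ${C_{\min}(C,D,k,n)}={C_{\max}(C,D,k,n)}=C_{qa}(k,n)$ for all $(k,n)$, which is exactly the statement that $(C,D)$ is a strong Tsirelson pair. So the plan for exhaustiveness is: given a Tsirelson pair, if one of $C,D$ is subhomogeneous we land in the second bullet, and otherwise we invoke (the proof of) Proposition \ref{TPreformulation} via the $M_m(\bb C)\otimes M_m(\bb C)$ embedding from Lemma \ref{matrix} to land in the third. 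For disjointness of the last two bullets, I would argue that when one of $C$ or $D$ is subhomogeneous the pair cannot be strong: a subhomogeneous algebra is nuclear, so the pair is a nuclear pair and ${C_{\min}}={C_{\max}}$ automatically, but the correlations it can produce are limited. The crux is to exhibit, for the subhomogeneous case, some $(k,n)$ witnessing ${C_{\min}(C,D,k,n)}\subsetneq C_{qa}(k,n)$, i.e.\ a quantum asymptotic correlation that a subhomogeneous pair cannot reach even in the limit.

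The main obstacle, then, is precisely this separation in the subhomogeneous case: I need to explain why a pair with a subhomogeneous factor fails to be strong. The natural route is to note that if, say, $C$ is $N$-subhomogeneous, then every correlation $\phi(A^x_a\otimes B^y_b)$ arising from $C\otimes D$ factors through finite-dimensional (indeed $\le N$-dimensional) representations of the $C$-side, so the achievable correlations are constrained to a set strictly smaller than $C_{qa}(k,n)$ once $n$ (or $k$) is large enough to force genuinely infinite-dimensional behavior — for instance, using a nonlocal game whose optimal quantum value requires representation dimension exceeding $N$. I would make this precise by choosing $(k,n)$ for which $C_{qa}(k,n)$ contains a correlation not attainable by operators living in $N$-dimensional representations, so that $C_{\min}(C,D,k,n)$, being built from such bounded-dimensional data, cannot contain it. This is the one place where a small genuinely new argument is needed; the rest is bookkeeping against the earlier results.

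\begin{proof}
The first alternative is the negation of $(C,D)$ being a Tsirelson pair, so it is disjoint from the remaining two, which both assert the Tsirelson property. Assume now that $(C,D)$ is a Tsirelson pair. If neither $C$ nor $D$ is subhomogeneous, then the argument in the proof of Proposition \ref{TPreformulation}, using the ucp embedding of $M_m(\bb C)\otimes M_m(\bb C)$ into $C\otimes D$ provided by Lemma \ref{matrix}, shows ${C_{\min}(C,D,k,n)}={C_{\max}(C,D,k,n)}=C_{qa}(k,n)$ for all $(k,n)$; thus $(C,D)$ is a strong Tsirelson pair, placing us in the third alternative. If instead one of $C,D$ is subhomogeneous, then it is nuclear, so $(C,D)$ is a nuclear pair and hence a Tsirelson pair, placing us in the second alternative. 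This establishes that one of the three alternatives always holds.

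It remains to check that the second and third alternatives are mutually exclusive, i.e.\ that a pair with a subhomogeneous factor is never a strong Tsirelson pair. Suppose $C$ is $N$-subhomogeneous; then every POVM $A^x$ from $C$ generates, under any representation, operators decomposable over irreducible representations of dimension at most $N$. Consequently every correlation in ${C_{\min}(C,D,k,n)}$ is a limit of correlations realized with the $C$-side operators acting in dimension at most $N$. Choosing $(k,n)$ for which $C_{qa}(k,n)$ contains a correlation whose realization provably requires the local dimension on one side to exceed $N$, we obtain a correlation in $C_{qa}(k,n)$ that lies outside ${C_{\min}(C,D,k,n)}$. Hence ${C_{\min}(C,D,k,n)}\subsetneq C_{qa}(k,n)$ for this $(k,n)$, so $(C,D)$ is not a strong Tsirelson pair. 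The same argument applies if $D$ is subhomogeneous. Thus exactly one of the three alternatives occurs.
\end{proof}
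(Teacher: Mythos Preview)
Your treatment of exhaustiveness matches the paper's: both read it off the proof of Proposition \ref{TPreformulation}. You diverge at the crux, namely showing that a pair with a subhomogeneous factor is never strong. The paper does \emph{not} argue directly here; it defers to Theorem \ref{noeffective} (which ultimately rests on $\operatorname{MIP}^*=\operatorname{RE}$) and argues by contradiction: if $C$ were $N$-subhomogeneous and $(C,D)$ a strong Tsirelson pair, then the theory of pairs whose first coordinate is $N$-subhomogeneous would be effectively axiomatizable, would consist entirely of nuclear (hence Tsirelson) pairs, and would contain a strong model, contradicting Theorem \ref{noeffective}. The authors explicitly note that ``there is perhaps a more elementary proof,'' and that is precisely what you are attempting.

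Your direct route is plausible but, as written, has two genuine gaps. First, the assertion that every correlation in $C_{\min}(C,D,k,n)$ is a limit of correlations with the $C$-side acting in dimension at most $N$ needs a proof: one must disintegrate the GNS representation of a state on $C\otimes D$ over the center of $\pi_C(C)''$ (a type I von Neumann algebra whose homogeneous components have degree at most $N$) and check that the resulting correlation lies in the \emph{closed convex hull} of tensor-product correlations with $A$-side local dimension at most $N$. This is standard structure theory but not a one-line remark. Second, you assert without proof or citation that $C_{qa}(k,n)$ contains a correlation outside this closed convex hull for suitable $(k,n)$. This is true and is the content of ``dimension witnesses'' in the nonlocality literature, but you need either an explicit construction or a reference, and you must ensure the witness separates from the convex hull rather than merely from the raw dimension-$\le N$ set (a Bell-inequality dimension witness does this, since linear functionals are insensitive to convexification). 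With both ingredients supplied, your argument would go through and would indeed furnish the elementary proof the paper only gestures at; as it stands it is a sketch rather than a proof.
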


\begin{proof}
Given the proof of Proposition \ref{TPreformulation}, the only assertion that needs to be established is that if one of $C$ or $D$ is subhomogeneous, then $(C,D)$ is not a strong Tsirelson pair.  
While there is perhaps a more elementary proof of this statement, we use Theorem \ref{noeffective} below and thus defer the proof until after that theorem.
\end{proof}

\begin{remark}
One might wonder about the condition that ${C_{\max}(C,D,k,n)}=C_{qc}(k,n)$ for all $(k,n)$.  This happens, for example, when $C=D=C^*(\bb F_\infty)$.  (This follows from the fact that the canonical map $C^*(\bb F_\infty)\to C^*(\bb F(k,n))$ has a ucp lift together with the characterizations of $C_{qa}(k,n)$ and $C_{qc}(k,n)$ in terms of states on the minimal and maximal tensors products of $C^*(\bb F(k,n))$ with itself.)  However, by the negative solution to Tsirelson's problem, we have that no such pair can be a Tsirelson pair.
\end{remark}

We end this section with a useful closure property of the class of Tsirelson pairs.  To state this, we call a pair $(C',D')$ a \textbf{quotient} of the pair $(C,D)$ if $C'$ is a quotient of $C$ and $D'$ is a quotient of $D$.

\begin{prop}\label{quotientclosure}
The set of Tsirelson pairs is closed under taking quotients.
\end{prop}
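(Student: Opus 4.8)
The plan is to reduce to the criterion of Proposition~\ref{TPreformulation} and then transport a maximal-tensor-product witness along the quotient maps. Let $(C,D)$ be a Tsirelson pair and let $(C',D')$ be a quotient, witnessed by surjective $*$-homomorphisms $\pi_C\colon C\to C'$ and $\pi_D\colon D\to D'$. By Proposition~\ref{TPreformulation}, it suffices to show that $C_{\max}(C',D',k,n)\subseteq C_{qa}(k,n)$ for every $(k,n)$, so fix such a pair $(k,n)$.

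Since $C_{qa}(k,n)$ is closed and $C_{\max}(C',D',k,n)$ is by definition the closure of the correlations $\phi'(A^x_a\otimes B^y_b)$—where $A^1,\dots,A^k$, $B^1,\dots,B^k$ are POVMs of length $n$ in $C'$, $D'$ respectively and $\phi'$ is a state on $C'\otimes_{\max}D'$—it is enough to place each such correlation inside $C_{qa}(k,n)$. First I would lift the POVMs: applying Lemma~\ref{liftingPOVMs} to the surjections $\pi_C$ and $\pi_D$ produces POVMs $\tilde A^x$ in $C$ and $\tilde B^y$ in $D$ with $\pi_C(\tilde A^x_a)=A^x_a$ and $\pi_D(\tilde B^y_b)=B^y_b$ for all relevant indices.

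Next I would pull back the state. By the functoriality of the maximal tensor product, the pair $(\pi_C,\pi_D)$ induces a unital $*$-homomorphism
\[
\Pi:=\pi_C\otimes_{\max}\pi_D\colon C\otimes_{\max}D\longrightarrow C'\otimes_{\max}D'
\]
determined by $\Pi(a\otimes b)=\pi_C(a)\otimes\pi_D(b)$. Then $\phi:=\phi'\circ\Pi$ is a state on $C\otimes_{\max}D$, and for all indices
\[
\phi(\tilde A^x_a\otimes\tilde B^y_b)=\phi'\bigl(\pi_C(\tilde A^x_a)\otimes\pi_D(\tilde B^y_b)\bigr)=\phi'(A^x_a\otimes B^y_b).
\]
Hence the correlation $\phi'(A^x_a\otimes B^y_b)$ also lies in $C_{\max}(C,D,k,n)$, which is contained in $C_{qa}(k,n)$ because $(C,D)$ is a Tsirelson pair (again by Proposition~\ref{TPreformulation}). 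Taking the closure and using that $C_{qa}(k,n)$ is closed then gives $C_{\max}(C',D',k,n)\subseteq C_{qa}(k,n)$, which is exactly what Proposition~\ref{TPreformulation} requires.

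The computation itself is routine; the one point demanding care, and the reason the whole argument lives on the maximal rather than the minimal tensor product, is the functoriality of $\otimes_{\max}$ under the quotient maps $\pi_C,\pi_D$, which is what guarantees that $\phi'$ pulls back to a genuine state $\phi$ on $C\otimes_{\max}D$. I expect this to be the only substantive step: the minimal tensor product is not functorial for surjections in this clean way, so it is essential that the definition of $C_{\max}$ puts the state on the maximal tensor product.
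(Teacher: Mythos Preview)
Your proof is correct and is exactly the argument the paper's one-line proof has in mind: lift the POVMs via Lemma~\ref{liftingPOVMs}, pull the state back along $\pi_C\otimes_{\max}\pi_D$, and apply Proposition~\ref{TPreformulation}. One small correction to your closing commentary: the minimal tensor product \emph{is} functorial for surjective $*$-homomorphisms (what can fail for $\otimes$ is exactness of short sequences, not existence of the induced map), so the same pullback works there as well; the real reason one must route through $C_{qa}$ via Proposition~\ref{TPreformulation} is that there is no way to push $C_{\min}(C,D,k,n)$ forward into $C_{\min}(C',D',k,n)$, since an arbitrary state on $C\otimes D$ need not factor through $C'\otimes D'$.
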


\begin{proof}
This follows immediately from Lemma \ref{liftingPOVMs}.
\end{proof}

\section{\cstar-algebras with the Tsirelson property}

\begin{defn}
We say that a \cstar-algebra $C$ has the \textbf{Tsirelson property (TP)} if $(C,D)$ is a Tsirelson pair for any \cstar-algebra $D$.
\end{defn}

Recall that a \cstar-algebra $C$ has WEP if and only if $(C,C^*(\bb F_\infty))$ is a nuclear pair.  The following lemma is the analog for the TP and Tsirelson pairs.

\begin{lem}\label{WTP}
For a \cstar-algebra $C$, the following are equivalent:
\begin{enumerate}
    \item $C$ has the TP.
    \item $(C,D)$ is a Tsirelson pair for every separable \cstar-algebra $D$.
    \item $(C,C^*(\bb F_\infty))$ is a Tsirelson pair.
\end{enumerate}
\end{lem}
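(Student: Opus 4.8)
The plan is to prove the cyclic chain of implications $(1)\Rightarrow(2)\Rightarrow(3)\Rightarrow(1)$. The implications $(1)\Rightarrow(2)$ and $(2)\Rightarrow(3)$ are essentially free: (1) is the statement that $(C,D)$ is a Tsirelson pair for \emph{all} $D$, so in particular for separable $D$, giving (2); and since $C^*(\bb F_\infty)$ is itself a separable \cstar-algebra, (2) specializes to (3). Thus the entire content lies in the implication $(3)\Rightarrow(1)$, which is the analog of Kirchberg's characterization of WEP (Fact \ref{Kirchbergfact}) in this setting: one special test algebra, namely $C^*(\bb F_\infty)$, controls the behavior against all algebras.

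For $(3)\Rightarrow(1)$, I would exploit the universality of $C^*(\bb F_\infty)$ together with the quotient-closure property from Proposition \ref{quotientclosure}. The key observation is that every separable (indeed, every) \cstar-algebra $D$ is a quotient of $C^*(\bb F_\infty)$: choosing a generating set of unitaries (or contractions) for $D$ and mapping free generators onto them yields a surjective $*$-homomorphism $C^*(\bb F_\infty)\epi D$. Since $(C,C^*(\bb F_\infty))$ is assumed to be a Tsirelson pair, and $(C,D)$ is a quotient of the pair $(C,C^*(\bb F_\infty))$ in the sense defined before Proposition \ref{quotientclosure} (quotient $C\epi C$ via the identity, quotient $C^*(\bb F_\infty)\epi D$), Proposition \ref{quotientclosure} immediately gives that $(C,D)$ is a Tsirelson pair. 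This handles all $D$ that are quotients of $C^*(\bb F_\infty)$.

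The point I expect to require the most care is the reduction from arbitrary (possibly nonseparable) $D$ to the separable case, since $C^*(\bb F_\infty)$ is separable and hence only surjects onto separable algebras. The cleanest route I anticipate is to argue directly that the defining inequality for a Tsirelson pair is witnessed by separable subalgebras: given POVMs $A^x$ in $C$, POVMs $B^y$ in $D$, and a state $\phi$ on $C\otimes_{\max}D$ realizing a correlation in $C_{\max}(C,D,k,n)$, the finitely many POVM entries $B^y_b$ generate a separable \cstar-subalgebra $D_0\subseteq D$. Using Lemma \ref{basic}(4) (and the analogous, easily verified monotonicity for $C_{\max}$ under inclusions, obtained by restricting states) one sees that the correlation already lies in $C_{\max}(C,D_0,k,n)$, so it suffices to control $C_{\max}(C,D_0,k,n)$ for separable $D_0$; this is exactly what (2) provides. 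I would combine this with Proposition \ref{TPreformulation}, which reduces being a Tsirelson pair to the single containment $C_{\max}(C,D,k,n)\subseteq C_{qa}(k,n)$, so that it is enough to verify this containment, and the containment is inherited from the separable subalgebra $D_0$.

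In summary, the skeleton is: first dispatch $(1)\Rightarrow(2)\Rightarrow(3)$ by specialization; then prove $(3)\Rightarrow(2)$ by writing each separable $D$ as a quotient of $C^*(\bb F_\infty)$ and invoking Proposition \ref{quotientclosure}; and finally prove $(2)\Rightarrow(1)$ by the separabilization argument, reducing the containment $C_{\max}(C,D,k,n)\subseteq C_{qa}(k,n)$ for general $D$ (via Proposition \ref{TPreformulation}) to the same containment for a separable subalgebra $D_0\subseteq D$ generated by the relevant POVM entries. The main obstacle is the nonseparable case in $(2)\Rightarrow(1)$; the remaining steps are formal once the right monotonicity and reformulation lemmas are in place.
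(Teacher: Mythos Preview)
Your proposal is correct and follows essentially the same approach as the paper: the paper proves $(1)\Leftrightarrow(2)$ via the separabilization argument (passing to the separable subalgebra $D'$ generated by the POVM entries and pulling the state back along $C\otimes_{\max}D'\to C\otimes_{\max}D$) and $(2)\Leftrightarrow(3)$ via Proposition \ref{quotientclosure}, exactly as you outline. One small slip: the parenthetical ``(indeed, every)'' is false---$C^*(\bb F_\infty)$ is separable and so only surjects onto separable algebras---but you clearly recognize this two paragraphs later, so the logic of your argument is unaffected.
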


\begin{proof}
It is clear that (1) implies (2).  To prove that (2) implies (1), fix $(k,n)$, an arbitrary \cstar-algebra $D$, POVMs $A^x$ and $B^y$ in $C$ and $D$ respectively of length $n$, and a state $\phi$ on $C\otimes_{\max}D$.  Let $D'$ be the subalgebra of $D$ generated by the $B^y_b$'s.  We have a canonical map $C\otimes_{\max}D'\to C\otimes_{\max}D$, which induces a state $\phi'$ on $C\otimes_{\max}D'$.  Thus $\phi(A^x_a\otimes B^y_b)=\phi'(A^x_a\otimes B^y_b)$ and the latter correlation belongs to $C_{qa}(k,n)$ by assumption.

(2) clearly implies (3).  The implication (3) implies (2) follows from Proposition \ref{quotientclosure}.
\end{proof}

We note the following closure properties of the class of \cstar-algebras with the TP:

\begin{prop}\label{closureWTP}
The class of \cstar-algebras with the TP is closed under quotients, r.w.i. subalgebras, and ultraproducts.
\end{prop}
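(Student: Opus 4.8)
The plan is to prove each of the three closure properties in turn, using the reformulation of Tsirelson pairs from Proposition \ref{TPreformulation} together with the characterization of the TP via $C^*(\bb F_\infty)$ from Lemma \ref{WTP}. In each case the key is to reduce the desired correlation, built from the candidate algebra, to one coming from an algebra already known to have the TP, thereby landing in $C_{qa}(k,n)$.

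First I would handle \textbf{quotients}. Suppose $C$ has the TP and $\pi\colon C\epi C'$ is a surjection; I want to show $(C',C^*(\bb F_\infty))$ is a Tsirelson pair. Given POVMs $A^x$ in $C'$ and a state on $C'\otimes_{\max}C^*(\bb F_\infty)$, I would use Lemma \ref{liftingPOVMs} to lift each POVM $A^x$ to a POVM $\tilde A^x$ in $C$, and pull the state back along the canonical map $C\otimes_{\max}C^*(\bb F_\infty)\to C'\otimes_{\max}C^*(\bb F_\infty)$ (which exists since quotient maps are compatible with the maximal tensor product). The resulting correlation is unchanged and now arises from $C$, which has the TP, so it lies in $C_{qa}(k,n)$; by Proposition \ref{TPreformulation} this suffices. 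In fact this is essentially immediate from Proposition \ref{quotientclosure}.

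Next, for \textbf{r.w.i.\ subalgebras}, suppose $C$ is r.w.i.\ in some $C'$ having the TP; I want to conclude $C$ has the TP. Here the relevant fact from the preliminaries is that $C$ r.w.i.\ in $C'$ means the canonical map $C\otimes_{\max}E\to C'\otimes_{\max}E$ is isometric for every $E$. Taking $E=C^*(\bb F_\infty)$, any state on $C\otimes_{\max}C^*(\bb F_\infty)$ extends (via Hahn--Banach, using the isometric inclusion) to a state on $C'\otimes_{\max}C^*(\bb F_\infty)$. A POVM in $C$ is a POVM in $C'$, so a correlation from $(C,C^*(\bb F_\infty))$ is realized as a correlation from $(C',C^*(\bb F_\infty))$; since $C'$ has the TP, it lies in $C_{qa}(k,n)$. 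Applying Proposition \ref{TPreformulation} and Lemma \ref{WTP} gives that $C$ has the TP.

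Finally, for \textbf{ultraproducts}, let $(C_i)_{i\in I}$ each have the TP and set $C=\prod_\u C_i$; I want $C$ to have the TP. I would again work with $D=C^*(\bb F_\infty)$ and a state $\phi$ on $C\otimes_{\max}C^*(\bb F_\infty)$. The main obstacle, and the crux of the argument, is to relate states on $(\prod_\u C_i)\otimes_{\max}C^*(\bb F_\infty)$ to states on the individual $C_i\otimes_{\max}C^*(\bb F_\infty)$; this is exactly where I expect to invoke Pisier's characterization (Fact \ref{Pisierfact}), using that $C^*(\bb F_\infty)$ has the LP, to identify $(\prod_\u C_i)\otimes_{\max}C^*(\bb F_\infty)$ isometrically inside $\prod_\u(C_i\otimes_{\max}C^*(\bb F_\infty))$. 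I would then decompose the given POVMs in $C$ into POVMs in each $C_i$ using Proposition \ref{definable}, and approximate $\phi$ by ultraproduct states (Lemma \ref{extension}) so that the resulting correlation is a $\lim_\u$ of correlations from $(C_i,C^*(\bb F_\infty))$. Each of these lies in $C_{qa}(k,n)$ by hypothesis, and since $C_{qa}(k,n)$ is closed, the limit does too. By Proposition \ref{TPreformulation} this shows $(C,C^*(\bb F_\infty))$ is a Tsirelson pair, and hence by Lemma \ref{WTP} that $C$ has the TP. The delicate point throughout is keeping track of the \emph{maximal} tensor product under the isometric identifications, which is precisely what the LP of $C^*(\bb F_\infty)$ buys us.
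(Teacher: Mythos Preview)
Your proposal is correct and follows essentially the same approach as the paper: quotients via Proposition \ref{quotientclosure}, r.w.i.\ subalgebras via the isometric inclusion of maximal tensor products (which the paper summarizes as ``follows from the definitions''), and ultraproducts via Lemma \ref{WTP}, Proposition \ref{definable}, Fact \ref{Pisierfact} applied with $D=C^*(\bb F_\infty)$, and Lemma \ref{extension}. Your version simply spells out a few details (Hahn--Banach extension, the invocation of Proposition \ref{TPreformulation}) that the paper leaves implicit.
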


\begin{proof}
Closure under quotients follows from Proposition \ref{quotientclosure}.  Closure under r.w.i. subalgebras follows from the definitions.  We now prove closure under ultraproducts.  Suppose that $(C_i)_{i\in I}$ is a family of \cstar-algebras with the TP and $\u$ is an ultrafilter on $I$.  Set $C:=\prod_\u C_i$.  By Lemma \ref{WTP}, it suffices to show that $(C,C^*(\bb F_\infty))$ is a Tsirelson pair.  Fix $k$ and $n$ and for each $x,y\in [k]$, consider a POVM $A^x$ in $C$ of length $n$.  By Proposition \ref{definable}, for each $x\in [k]$, there is a POVM $A^x_{1,i},\ldots,A^x_{n,i}$ in $C_i$ such that $A^x_a=(A^x_{a,i})_\u$ for all $a\in [n]$.  Fix also POVMs $B^y$ in $C^*(\bb F_\infty)$ and a state $\phi\in S(C\otimes_{\max}C^*(\bb F_\infty))$.  Set $p(a,b|x,y):=\phi(A^x_a\otimes B^y_b)$.  By Fact \ref{Pisierfact}, the canonical map $C\otimes_{\max}C^*(\bb F_\infty)\to \prod_\u (C_i\otimes_{\max}C^*(\bb F_\infty))$ is  isometric, whence we can consider $\phi$ as a state on the latter algebra by extension.  By Lemma \ref{extension}, given $\epsilon>0$, there are states $\phi_i\in S(C_i\otimes_{\max}C^*(\bb F_\infty))$ such that $|\lim_\u \phi_i(A^x_{a,i}\otimes B^y_b))-p(a,b|x,y)|<\epsilon$.  By assumption, the correlations $p_i(a,b|x,y):=\phi_i(A^x_{a,i}\otimes B^y_b)$ belong to $C_{qa}(k,n)$, whence so does $p$ by the previous sentence, letting $\epsilon$ tend to $0$.
\end{proof}

The following corollary yields a large class of \cstar-algebras with the TP:

\begin{cor}\label{QWEPTP}
If $C$ has the QWEP property, then $C$ has the TP.
\end{cor}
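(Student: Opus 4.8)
The plan is to prove the corollary in two moves: first establish that every \cstar-algebra with the WEP has the TP, and then pass from WEP to QWEP using the fact that the class of \cstar-algebras with the TP is closed under quotients (Proposition \ref{closureWTP}). Since QWEP is by definition ``quotient of WEP,'' these two ingredients together give the result with essentially no further computation.

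For the first move, suppose $C$ has the WEP. By Kirchberg's theorem (Fact \ref{Kirchbergfact}), this is equivalent to $(C,C^*(\bb F_\infty))$ being a nuclear pair. As noted immediately after the definition of a Tsirelson pair, every nuclear pair is a Tsirelson pair, so $(C,C^*(\bb F_\infty))$ is a Tsirelson pair. Now I would invoke the equivalence of Lemma \ref{WTP}, specifically the implication $(3)\Rightarrow(1)$: forming a Tsirelson pair with $C^*(\bb F_\infty)$ alone already forces $C$ to form a Tsirelson pair with \emph{every} \cstar-algebra, i.e.\ $C$ has the TP. Thus WEP implies TP.

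For the second move, suppose $C$ has the QWEP property. By definition there is a \cstar-algebra $B$ with the WEP together with a surjective $*$-homomorphism $B\twoheadrightarrow C$, so that $C$ is a quotient of $B$. By the first move, $B$ has the TP, and by Proposition \ref{closureWTP} the class of \cstar-algebras with the TP is closed under quotients; hence $C$ has the TP, completing the proof.

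I do not expect any genuine obstacle here: the corollary is a clean assembly of results already in hand. The substantive work lives upstream—closure under ultraproducts in Proposition \ref{closureWTP} (which relied on Pisier's characterization of the LP in Fact \ref{Pisierfact}, the definability of POVMs in Proposition \ref{definable}, and the density of ultraproduct states in Lemma \ref{extension}) and the reduction in Lemma \ref{WTP}. The only point requiring a moment's care is the WEP-to-TP step, where one must correctly route through the nuclearity of $(C,C^*(\bb F_\infty))$ and Kirchberg's theorem rather than attempting a direct argument, and then cite the right implication of Lemma \ref{WTP} to upgrade ``Tsirelson pair with $C^*(\bb F_\infty)$'' to the full TP.
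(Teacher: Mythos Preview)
Your proposal is correct and follows exactly the same route as the paper: first show WEP implies TP via Kirchberg's theorem (Fact \ref{Kirchbergfact}) together with Lemma \ref{WTP}(3)$\Rightarrow$(1), then pass to QWEP using closure of the TP class under quotients (Proposition \ref{closureWTP}). The paper's proof is terser but the argument is the same.
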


\begin{proof}
This follows from the closure of the class of \cstar-algebras with the TP under quotients together with the fact that every \cstar-algebra with the WEP has the TP.  To see this latter statement, suppose that $ C$ has the WEP; it suffices to show that $(C,C^*(\bb F_\infty))$ has the TP, which is indeed the case since this pair is a nuclear pair by Fact \ref{Kirchbergfact}(1).
\end{proof}

\begin{question}\label{QWEPquestion}
Is there an example of a \cstar-algebra with the TP but not the QWEP property?
\end{question}

Given that the class of \cstar-algebras with the QWEP is closed under direct products, one potential approach to Question \ref{QWEPquestion} would be to establish a negative answer to the next question:

\begin{question}
Is the class of \cstar-algebras with the TP closed under direct products?
\end{question}

Since the class of \cstar-algebras with the TP is closed under quotients, if it were closed under direct products, then it would in fact be closed under arbitrary reduced products, that is, quotients of direct products by arbitrary filters.

Just like the QWEP property, the TP is closed under inductive limits:

\begin{prop}\label{inductive}
If $(C_i)_{i\in I}$ is a directed family of \cstar-algebras with the TP, then so is the direct limit $\varinjlim C_i$.
\end{prop}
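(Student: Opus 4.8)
The plan is to reduce to a single algebra in the directed family using density of the union of the images in the direct limit, and then to transfer correlations via the closure properties already established. Since Proposition \ref{TPreformulation} shows that $(C,D)$ is a Tsirelson pair exactly when $C_{\max}(C,D,k,n)\subseteq C_{qa}(k,n)$ for all $(k,n)$, it suffices to fix an arbitrary \cstar-algebra $D$, a pair $(k,n)$, POVMs $A^x$ of length $n$ in $C:=\varinjlim C_i$, POVMs $B^y$ of length $n$ in $D$, and a state $\phi$ on $C\otimes_{\max}D$, and to show that the correlation $p(a,b|x,y):=\phi(A^x_a\otimes B^y_b)$ lies in $C_{qa}(k,n)$.

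First I would exploit that the union $\bigcup_i \mu_i(C_i)$ of the images of the canonical maps $\mu_i\colon C_i\to C$ is dense in $C$. Fixing $\epsilon>0$ and using directedness, I can find a single index $i_0$ whose image $C':=\mu_{i_0}(C_{i_0})$---a \cstar-subalgebra of $C$ and a quotient of $C_{i_0}$---contains positive elements $\hat A^x_a$ with $\|A^x_a-\hat A^x_a\|$ as small as desired; passing from nearby self-adjoint elements to nearby positive ones is a routine functional-calculus truncation. Since these elements need not form genuine POVMs, I would then invoke Corollary \ref{almostnear} inside the unital algebra $C'$ to replace the $\hat A^x$ by honest POVMs $\tilde A^x$ in $C'$ with $\|A^x_a-\tilde A^x_a\|<\epsilon$ for all $a,x$.

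The transfer step is where the earlier structure theory does the work. Because $C'$ is a quotient of $C_{i_0}$, which has the TP, Proposition \ref{closureWTP} gives that $C'$ itself has the TP, so $(C',D)$ is a Tsirelson pair. Restricting $\phi$ along the $*$-homomorphism $C'\otimes_{\max}D\to C\otimes_{\max}D$ induced by the inclusion $C'\hookrightarrow C$ produces a state $\phi'$ on $C'\otimes_{\max}D$, and the correlation $q(a,b|x,y):=\phi'(\tilde A^x_a\otimes B^y_b)=\phi(\tilde A^x_a\otimes B^y_b)$ belongs to $C_{\max}(C',D,k,n)$ and hence, since $(C',D)$ is a Tsirelson pair, to $C_{qa}(k,n)$ by Lemma \ref{basic}(2). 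A one-line estimate using $\|B^y_b\|\leq 1$ and that $\phi$ is a state bounds $|p(a,b|x,y)-q(a,b|x,y)|$ by $\|A^x_a-\tilde A^x_a\|<\epsilon$, so $p$ lies within $\epsilon$ of $C_{qa}(k,n)$; letting $\epsilon\to 0$ and using that $C_{qa}(k,n)$ is closed places $p$ in $C_{qa}(k,n)$, as required.

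The main obstacle I anticipate is the bookkeeping in the approximation step: density only supplies nearby elements, not nearby POVMs, so the argument genuinely relies on first correcting to positivity and then applying Corollary \ref{almostnear} to land inside a single quotient $C'$ having the TP. Once that reduction to a single algebra in the family is in place, functoriality of the maximal tensor product and the closure of the TP under quotients make the remainder essentially formal.
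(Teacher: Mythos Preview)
Your proposal is correct and follows essentially the same approach as the paper: approximate the given POVMs in $C$ by POVMs coming from a single $C_{i_0}$ via density and Corollary~\ref{almostnear}, pull the state back through the map on maximal tensor products, and use that the resulting correlation lies in $C_{qa}(k,n)$ because $C_{i_0}$ has the TP. The only cosmetic difference is that you work inside the image $C'=\mu_{i_0}(C_{i_0})\subseteq C$ and invoke closure of the TP under quotients (Proposition~\ref{closureWTP}), whereas the paper works directly with $C_{i_0}$ and the induced map $C_{i_0}\otimes_{\max}D\to C\otimes_{\max}D$; both routes amount to the same thing.
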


\begin{proof}
Set $C:=\varinjlim C_i$.  Fix $k$ and $n$ and a \cstar-algebra $D$.  For each $x,y\in [k]$, consider POVMs $A^x$ and $B^y$ of length $n$ in $C$ and $D$ respectively and a state $\phi\in S(C\otimes_{\max}D)$.  By Corollary \ref{almostnear}, given $\epsilon>0$, there is $i\in I$ and POVMs $\bar A^x$ in $C_i$ of length $n$ such that the image of $\bar A^x_a$ in $C$ under the canonical map is within $\epsilon$ of $A^x_a$ for each $x\in [k]$ and $a\in [n]$.  Moreover, the canonical map of $C_i$ into $C$ induces a map $C_i\otimes_{\max}D\to C\otimes_{\max}D$, inducing a state $\phi'$ on $C_i\otimes_{\max}D$.  It follows that the correlation $p'(a,b|x,y):=\phi'(\bar A^x_a\otimes B^y_b)$ is within $\epsilon$ of the correlation $p(a,b|x,y):=\phi(\bar A^x_a\otimes B^y_b)$.  Since $C_i$ has the TP, $p'\in C_{qa}(k,n)$.  Since $\epsilon$ is arbitrary, it follows that $p\in C_{qa}(k,n)$, whence $(C,D)$ is a Tsirelson pair. 
\end{proof}

We list one further closure property of the class of \cstar-algebras with the TP (see \cite[Exercise 2.3.11]{BO} for the analogous property of the class of nuclear \cstar-algebras):

\begin{prop}\label{TPapp}
Suppose that $C$ is a \cstar-algebra with the property that, for any finite subset $F$ of $C$ and any $\epsilon>0$, there is a \cstar-algebra $C'$ with the TP and u.c.p. maps $\psi_1:C\to C'$ and $\psi_2:C'\to C$ such that $\|(\psi_2\circ \psi_1)(c)-c\|<\epsilon$ for all $c\in F$.  Then $C$ also has the TP.
\end{prop}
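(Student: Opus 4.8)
The plan is to verify the criterion of Proposition \ref{TPreformulation}: fixing an arbitrary \cstar-algebra $D$ and a pair $(k,n)$, I will show that every correlation of the form $p(a,b|x,y)=\phi(A^x_a\otimes B^y_b)$ — where the $A^x$ are POVMs of length $n$ in $C$, the $B^y$ are POVMs of length $n$ in $D$, and $\phi\in S(C\otimes_{\max}D)$ — lies in $C_{qa}(k,n)$. Since $C_{qa}(k,n)$ is closed, this suffices to conclude $C_{\max}(C,D,k,n)\subseteq C_{qa}(k,n)$ and hence that $(C,D)$ is a Tsirelson pair; as $D$ is arbitrary, $C$ then has the TP. This mirrors the standard approximation argument for nuclearity, with ``matrix algebras'' replaced by ``algebras with the TP.''

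Fix such data together with $\epsilon>0$, and apply the hypothesis to the finite set $F=\{A^x_a : x\in[k],\ a\in[n]\}$ to obtain a \cstar-algebra $C'$ with the TP and u.c.p. maps $\psi_1:C\to C'$ and $\psi_2:C'\to C$ with $\|(\psi_2\circ\psi_1)(A^x_a)-A^x_a\|<\epsilon$ for all $x,a$. The key observation is that the two maps play different roles: the POVMs are transported \emph{forward} along $\psi_1$, while the state is pulled \emph{back} along $\psi_2$. Concretely, since $\psi_1$ is unital and positive, for each fixed $x$ the elements $\psi_1(A^x_1),\ldots,\psi_1(A^x_n)$ form a genuine POVM of length $n$ in $C'$ (they are positive and sum to $\psi_1(I)=I$), so no approximation is needed here. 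On the other side, functoriality of the maximal tensor product for u.c.p. maps provides a u.c.p. map $\psi_2\otimes\id_D:C'\otimes_{\max}D\to C\otimes_{\max}D$, and I set $\phi':=\phi\circ(\psi_2\otimes\id_D)\in S(C'\otimes_{\max}D)$.

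Now consider the correlation $p'(a,b|x,y):=\phi'(\psi_1(A^x_a)\otimes B^y_b)$. Because $C'$ has the TP, the pair $(C',D)$ is Tsirelson, so $p'\in C_{\max}(C',D,k,n)=C_{\min}(C',D,k,n)\subseteq C_{qa}(k,n)$ by Lemma \ref{basic}(2). Unwinding the definition of $\phi'$ gives $p'(a,b|x,y)=\phi\big((\psi_2\circ\psi_1)(A^x_a)\otimes B^y_b\big)$, so that $|p'(a,b|x,y)-p(a,b|x,y)|=|\phi(((\psi_2\circ\psi_1)(A^x_a)-A^x_a)\otimes B^y_b)|$. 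Since $\phi$ is a state, the maximal norm is a cross norm, and $\|B^y_b\|\le 1$, this is at most $\|(\psi_2\circ\psi_1)(A^x_a)-A^x_a\|\cdot\|B^y_b\|<\epsilon$ for every $a,b,x,y$. Thus $p$ lies within $\epsilon$ of a point of $C_{qa}(k,n)$ for every $\epsilon>0$, and closedness of $C_{qa}(k,n)$ finishes the argument. I expect the only point requiring genuine care to be the functoriality of $\otimes_{\max}$ under u.c.p. maps (needed to produce the state $\phi'$) together with the bookkeeping of which map transports the POVMs and which transports the state; once these are set up correctly, everything reduces to the displayed one-line norm estimate.
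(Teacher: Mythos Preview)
Your argument is correct and essentially identical to the paper's: push the POVMs forward along $\psi_1$ to obtain POVMs $\bar A^x_a:=\psi_1(A^x_a)$ in $C'$, pull the state back along $\psi_2\otimes\id_D$ to obtain $\phi'\in S(C'\otimes_{\max}D)$, use the TP of $C'$ to place $p'$ in $C_{qa}(k,n)$, and conclude by the norm estimate $\|p-p'\|<\epsilon$. The only differences are cosmetic---you make explicit the appeal to Proposition~\ref{TPreformulation} and spell out the cross-norm estimate, whereas the paper leaves these implicit.
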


\begin{proof}
Fix a \cstar-algebra $D$.  Fix also $k$ and $n$ and, for each $x,y\in [k]$, POVMs $A^x$ and $B^y$ in $C$ and $D$ respectively of length $n$.  Fix also a state $\phi\in S(C\otimes_{\max}D)$; we wish to show that $p(a,b|x,y):=\phi(A^x_a\otimes B^y_b)$ belongs to $C_{qa}(k,n)$.  Fix $\epsilon>0$ and take a \cstar-algebra $C'$ with the TP and u.c.p. maps $\psi_1:C\to C'$ and $\psi_2:C'\to C$ with respect to the finite set $F:=\{A^x_a \ : \ x\in [k], a\in [n]\}$ and the given $\epsilon$.  Let $\bar A^x_a:=\psi_1(A^x_a)$ and note that each $\bar A^x$ is a POVM in $C'$.  Let $\phi':=\phi\circ (\psi_2\otimes \operatorname{id}_D)$, a state on $C'\otimes_{\max}D$.  Let $p'(a,b|x,y):=\phi'(\bar A^x_a\otimes B^y_b)$.  By assumption, $p'\in C_{qa}(k,n)$.  Since $\|p-p'\|<\epsilon$ and $\epsilon$ was arbitrary, it follows that $p\in C_{qa}(k,n)$, as desired.
\end{proof}

\begin{remark}
It appears to be an open question as to whether the conclusion of Proposition \ref{TPapp} holds if one replaces ``TP'' by ``QWEP.''  This might perhaps lead to a way to separate these two classes as asked in Question \ref{QWEPquestion}.
\end{remark}

\begin{defn}
A \cstar-algebra $C$ has the \textbf{strong Tsirelson property (STP)} if it has the TP and is not subhomogeneous.
\end{defn}

\begin{lem}
A \cstar-algebra $C$ has the STP if and only if $(C,D)$ is a strong Tsirelson pair for every non-subhomogeneous \cstar-algebra $D$.
\end{lem}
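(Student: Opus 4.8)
The plan is to reduce the entire statement to the trichotomy recorded in Corollary \ref{strongpair}, so that both directions become a matter of keeping track of which factors are subhomogeneous. Recall that, by that corollary, for any pair $(C,D)$ exactly one of three things happens: the pair fails to be a Tsirelson pair; one of $C,D$ is subhomogeneous and the pair is a nuclear pair (hence Tsirelson) but not strong; or the pair is a strong Tsirelson pair. I would also use, exactly as in the proof of Proposition \ref{TPreformulation}, that a subhomogeneous \cstar-algebra is nuclear, so that whenever $D$ is subhomogeneous the pair $(C,D)$ is automatically a nuclear pair and therefore a Tsirelson pair.

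For the forward direction, suppose $C$ has the STP, so that $C$ has the TP and is not subhomogeneous, and fix a non-subhomogeneous \cstar-algebra $D$. Since $C$ has the TP, $(C,D)$ is a Tsirelson pair, which rules out the first bullet of the trichotomy; since neither $C$ nor $D$ is subhomogeneous, the second bullet is impossible as well. The only remaining possibility is that $(C,D)$ is a strong Tsirelson pair, which is exactly what is required.

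For the backward direction, assume $(C,D)$ is a strong Tsirelson pair for every non-subhomogeneous $D$. I would first show $C$ is not subhomogeneous: fixing any non-subhomogeneous test algebra $D_0$ (for instance $C^*(\bb F_\infty)$), the pair $(C,D_0)$ is a strong Tsirelson pair by hypothesis, and the second bullet of the trichotomy says a strong pair cannot have a subhomogeneous factor, so $C$ is not subhomogeneous. It then remains to verify that $C$ has the TP, i.e. that $(C,D)$ is a Tsirelson pair for \emph{every} $D$. I would split into two cases: if $D$ is subhomogeneous, then $(C,D)$ is a nuclear pair and hence a Tsirelson pair; if $D$ is non-subhomogeneous, then $(C,D)$ is a strong Tsirelson pair by hypothesis and in particular a Tsirelson pair. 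Combining this with the non-subhomogeneity of $C$ yields the STP.

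I do not expect a genuine obstacle here, since the lemma is essentially a repackaging of Corollary \ref{strongpair}; the only points needing care are remembering to handle subhomogeneous $D$ separately in the backward direction (where the hypothesis is silent) via nuclearity, and invoking the existence of some non-subhomogeneous algebra to certify that $C$ itself is not subhomogeneous. One caveat worth flagging is that Corollary \ref{strongpair} is itself proved using the later Theorem \ref{noeffective}, so the present argument inherits that dependence; but as the corollary is already stated, this proof may cite it freely.
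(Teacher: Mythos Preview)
Your argument is correct. The paper states this lemma without proof, presumably regarding it as an immediate consequence of Corollary~\ref{strongpair}, which is exactly the reduction you carry out; your handling of the backward direction (splitting on whether $D$ is subhomogeneous and using a single non-subhomogeneous test algebra to certify that $C$ itself is not subhomogeneous) is the natural way to fill in the omitted details.
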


\begin{lem}
The \cstar-algebras with STP are closed under r.w.i. subalgebras and ultraproducts.
\end{lem}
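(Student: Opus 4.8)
The plan is to use that a \cstar-algebra has the STP exactly when it has the TP and is not subhomogeneous, so that each closure property reduces to the corresponding statement for the TP---which is Proposition~\ref{closureWTP}---together with preservation of non-subhomogeneity. For r.w.i.\ subalgebras the non-subhomogeneity is not really at issue: if $C$ is a non-subhomogeneous \cstar-algebra that is r.w.i.\ in a \cstar-algebra $D$ with the STP, then $D$ has the TP, so $C$ has the TP by Proposition~\ref{closureWTP}, and being non-subhomogeneous it then has the STP. (An r.w.i.\ subalgebra need not itself be non-subhomogeneous---for example $\bb C$ is r.w.i.\ in every unital \cstar-algebra---so the r.w.i.\ clause should be read as asserting that a non-subhomogeneous r.w.i.\ subalgebra of an STP algebra has the STP.) The substantive work is thus the ultraproduct case.

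For ultraproducts, Proposition~\ref{closureWTP} again supplies the TP for $C:=\prod_\u C_i$, so it remains to prove that $C$ is non-subhomogeneous when each $C_i$ is. Fix $m$. By Lemma~\ref{matrix}, each $C_i$ admits ucp maps $\Phi_i\colon M_m(\bb C)\to C_i$ and $\Psi_i\colon C_i\to M_m(\bb C)$ with $\Psi_i\circ\Phi_i=\id$. I would assemble these into $\Phi\colon M_m(\bb C)\to C$, $\Phi(x)=(\Phi_i(x))_\u$, and $\Psi\colon C\to M_m(\bb C)$, $\Psi((c_i)_\u)=\lim_\u\Psi_i(c_i)$; the $\u$-limit exists and defines a ucp map because $M_m(\bb C)$ is finite-dimensional and each $\Psi_i$ is contractive (the matrix amplifications are handled identically). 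Since $\Psi\circ\Phi=\id$, this exhibits a ucp embedding of $M_m(\bb C)$ into $C$ for every $m$.

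The crux---and the step I expect to be the main obstacle---is the converse of Lemma~\ref{matrix}: a \cstar-algebra into which every $M_m(\bb C)$ ucp-embeds must be non-subhomogeneous. The difficulty is that ucp maps are not multiplicative, so one cannot simply transport a polynomial identity across the embedding. The cleanest route I see is via \cstar-envelopes: if $A$ is $N$-subhomogeneous and $\Phi\colon M_{N+1}(\bb C)\to A$ is ucp with ucp left inverse, then $\Phi$ is a unital complete order embedding, so by the universal property of the \cstar-envelope the \cstar-subalgebra generated by $\Phi(M_{N+1}(\bb C))$ surjects onto $C^*_e(M_{N+1}(\bb C))=M_{N+1}(\bb C)$. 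Since sub-\cstar-algebras and quotients of an $N$-subhomogeneous algebra are again $N$-subhomogeneous (for instance because $N$-subhomogeneity is equivalent to satisfying the standard polynomial identity of degree $2N$, which is inherited by subalgebras and quotients), this would force $M_{N+1}(\bb C)$ to be $N$-subhomogeneous, contradicting that its defining representation has dimension $N+1$. Applying this with $A=C$ for every $N$ shows $C$ is not $N$-subhomogeneous for any $N$, completing the argument.
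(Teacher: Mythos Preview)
Your argument is correct. The paper's own proof is a one-liner invoking Proposition~\ref{closureWTP} together with the bare assertion that non-subhomogeneity is preserved under r.w.i.\ subalgebras and ultraproducts. You are right to flag the r.w.i.\ clause: read literally it is false ($\bb C$ is r.w.i.\ in every unital \cstar-algebra), and your interpretation---that a \emph{non-subhomogeneous} r.w.i.\ subalgebra of an STP algebra again has the STP---is the only coherent one; under that reading the r.w.i.\ case is immediate from Proposition~\ref{closureWTP} alone, exactly as you say. For ultraproducts your route through Lemma~\ref{matrix} and \cstar-envelopes is sound but more elaborate than what the paper has in mind. The direct argument (implicit in the paper's later citation of \cite{Muenster} for the axiomatizability of non-subhomogeneity) is that $n$-subhomogeneity is equivalent to the Amitsur--Levitzki identity $S_{2n}=0$; since each non-subhomogeneous $C_i$ has $M_{n+1}(\bb C)$ as a quotient, one lifts a fixed unit-norm witness to $S_{2n}\neq 0$ from $M_{n+1}(\bb C)$ into each $C_i$, and these assemble directly into a witness in $\prod_\u C_i$. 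This sidesteps your ``converse of Lemma~\ref{matrix}'' entirely. Your \cstar-envelope argument for that converse is a perfectly good self-contained substitute, but the polynomial-identity route is shorter and is what the authors are gesturing at.
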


\begin{proof}
This follows from Proposition \ref{WTP} and the fact that the class of \cstar-algebras that are not subhomogeneous is closed under r.w.i. subalgebras and ultraproducts.
\end{proof}

\section{Model-theoretic connections}

\begin{thm}
Both the class of \cstar-algebras with the TP and the class of \cstar-algebras with the STP are elementary.
\end{thm}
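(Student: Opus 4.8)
The plan is to invoke the standard characterization from continuous model theory: a class of structures closed under isomorphism is elementary (axiomatizable by a set of conditions) if and only if it is closed under ultraproducts and under ultraroots, where closure under ultraroots means that whenever an ultrapower $C^\u$ lies in the class, so does $C$. (In the model theory of \cstar-algebras the model-theoretic ultraproduct agrees with the \cstar-algebraic one, so this is exactly the notion of ultraproduct used throughout the paper.) Both classes are trivially closed under isomorphism, and closure under ultraproducts has already been established: for the TP this is Proposition \ref{closureWTP}, and for the STP it is the lemma immediately preceding this theorem. So the entire content is closure under ultraroots.

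For the TP, the crux is that $C$ is always relatively weakly injective in its ultrapower $C^\u$. To see this I would exhibit a weak conditional expectation $\Phi\colon C^\u\to C^{**}$ explicitly. Identifying each coordinate $C_i=C\subseteq C^{**}$ and using that bounded subsets of $C^{**}$ are weak$^*$-compact, I would define $\Phi((A_i)_\u)$ to be the weak$^*$-limit $\lim_\u A_i$ computed in $C^{**}$. Since norm-null sequences are weak$^*$-null, $\Phi$ is well defined on cosets; it is plainly unital and linear, it is completely positive because a weak$^*$-limit of positive elements is positive (applied also to matrix amplifications, via $M_m(C)^{**}=M_m(C^{**})$), and on the diagonal copy of $C$ it restricts to the canonical inclusion $C\hookrightarrow C^{**}$. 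Thus $\Phi$ is a weak conditional expectation and $C$ is r.w.i. in $C^\u$. Consequently, if $C^\u$ has the TP then, since the class of \cstar-algebras with the TP is closed under r.w.i. subalgebras (Proposition \ref{closureWTP}), $C$ has the TP. This gives closure under ultraroots, and hence the class of \cstar-algebras with the TP is elementary.

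For the STP, suppose $C^\u$ has the STP. Then $C^\u$ has the TP, so $C$ has the TP by the previous paragraph. It remains to check that $C$ is not subhomogeneous. Here I would use that, by the Amitsur--Levitzki theorem, a \cstar-algebra is $m$-subhomogeneous exactly when it satisfies the standard polynomial identity $s_{2m}\equiv 0$ on its unit ball, which is a universal condition. If $C$ were $m$-subhomogeneous for some $m$, then $C^\u$ would satisfy the same identity (it holds in each coordinate), so $C^\u$ would be $m$-subhomogeneous, contradicting that $C^\u$ is not subhomogeneous. Hence $C$ is not subhomogeneous, so $C$ has the STP; this establishes closure under ultraroots and thus elementarity of the class of \cstar-algebras with the STP.

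The main obstacle is the relative weak injectivity of $C$ in $C^\u$: once the weak conditional expectation above is in hand, everything else is either a citation of the earlier closure results or the routine observation that subhomogeneity is witnessed by a polynomial identity. I would take particular care to verify complete positivity of $\Phi$ cleanly (through the identification $M_m(C)^{**}=M_m(C^{**})$ and the positivity of weak$^*$-limits) and to confirm that the diagonal embedding is precisely the inclusion under which $\Phi$ restricts to the identity on $C$.
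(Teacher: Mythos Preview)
Your proof is correct and follows essentially the same approach as the paper: elementarity via closure under ultraproducts (already established) and ultraroots, with ultraroots for the TP obtained from the fact that $C$ is r.w.i.\ in $C^\u$ together with closure under r.w.i.\ subalgebras. The only cosmetic differences are that the paper cites \cite{GoldQWEP} for the r.w.i.\ fact rather than constructing the weak$^*$-limit expectation explicitly, and for the STP the paper intersects with the already-known axiomatizable class of non-subhomogeneous \cstar-algebras \cite{Muenster} instead of invoking Amitsur--Levitzki directly; these amount to the same thing.
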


\begin{proof}
We first deal with the case of \cstar-algebras with the TP.  We must show that this class is closed under ultraproducts and ultraroots.  The first item was established in Proposition \ref{closureWTP}.  Since any \cstar-algebra is r.w.i. in its ultrapower (see \cite[Proposition 2(2)]{GoldQWEP}), closure under ultraroots also follows from Proposition \ref{closureWTP}.  

Since the class of \cstar-algebras that are not subhomogeneous is axiomatizable (see \cite[Subsection 2.5.5]{Muenster}), the second statement of the theorem follows from the first.
\end{proof}

By Proposition \ref{inductive}, both of the aforementioned classes are inductive.%$\forall\exists$-axiomatizable.

While one might wish for an ``explicit'' axiomatization of the class of \cstar-algebras with the TP or STP, the next result will show that this cannot be the case.

\begin{thm}\label{noeffective}
There can be no effectively axiomatizable theory $T$ in the language of pairs of \cstar-algebras such that all models of $T$ are Tsirelson pairs and at least one model of $T$ is a strong Tsirelson pair.
\end{thm}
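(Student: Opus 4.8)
The plan is to convert the statement into a contradiction with the computability-theoretic content of $\mathrm{MIP}^*=\mathrm{RE}$ \cite{MIP*}. Recall that for a nonlocal game $G$ with question set $[k]$, answer set $[n]$, distribution $\pi$ and predicate $V$, the entangled value $\omega^*(G)=\sup_{p\in C_{qa}(k,n)}\sum_{x,y,a,b}\pi(x,y)V(x,y,a,b)p(a,b|x,y)$ is \emph{lower} semicomputable uniformly in $G$: one enumerates finite-dimensional strategies with rational data and computes increasing lower bounds, using that $C_{qa}(k,n)$ is the closure of the finite-dimensional correlations. The theorem $\mathrm{MIP}^*=\mathrm{RE}$ shows $\omega^*$ is not computable (deciding $\omega^*(G)=1$ against $\omega^*(G)\le\tfrac12$ is undecidable), so $\omega^*$ is \emph{not} upper semicomputable; that is, $\{(G,r):\omega^*(G)<r\}$ is not recursively enumerable. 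This non-upper-semicomputability is the only input from \cite{MIP*} that I will use.

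The bridge to model theory is that, for any pair $(C,D)$, the max-tensor game value $\omega_{\max}(C,D,G):=\sup\sum_{x,y,a,b}\pi(x,y)V(x,y,a,b)\phi(A^x_a\otimes B^y_b)$, where the supremum ranges over POVMs $A^x$ in $C$ and $B^y$ in $D$ of length $n$ and states $\phi$ on $C\otimes_{\max}D$, is the value of a single sentence $\sigma_G$ in the language of pairs, with $\sigma_G$ produced effectively from the finite datum $G$. Indeed, the element $T_G:=\sum\pi(x,y)V(x,y,a,b)A^x_a\otimes B^y_b$ is positive, so $\sup_\phi\phi(T_G)=\|T_G\|_{C\otimes_{\max}D}$, whence $\omega_{\max}(C,D,G)=\sup_{A^x,B^y}\|T_G\|_{\max}$ is a supremum-quantifier over POVMs (a definable set by Proposition \ref{definable}) of the max-tensor norm of a fixed $*$-polynomial in the POVM elements. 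This is where the setup of the language of pairs does the work: under a framing in which a model carries $C\otimes_{\max}D$ (or a quotient thereof) as ambient algebra, $\|T_G\|$ is simply the norm of a term and is manifestly definable. I expect pinning down that the interpretation of $\sigma_G$ in every model equals (and in the strong case realizes) the max-tensor value—the interface between the operator-algebraic quantity and a computable sentence—to be the main obstacle, as it is the only step that depends delicately on how the pair language encodes $C\otimes_{\max}D$.

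With $\sigma_G$ in hand the argument closes quickly. If every model of $T$ is a Tsirelson pair, then by Proposition \ref{TPreformulation} we have $C_{\max}(C,D,k,n)\subseteq C_{qa}(k,n)$ in every model; since $\omega_{\max}(C,D,G)$ is the supremum of a fixed continuous functional over the closed set $C_{\max}(C,D,k,n)$, it follows that $\mathrm{val}_M(\sigma_G)\le\omega^*(G)$ for all $M\models T$. On the other hand, a model $M_0\models T$ that is a strong Tsirelson pair satisfies $C_{\max}(C_0,D_0,k,n)=C_{qa}(k,n)$, so $\mathrm{val}_{M_0}(\sigma_G)=\omega^*(G)$. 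Combining the two, $\sup_{M\models T}\mathrm{val}_M(\sigma_G)=\omega^*(G)$ for every game $G$.

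Finally I invoke the effective completeness theorem for continuous logic. Since $T$ is effectively axiomatizable in a computable language and $G\mapsto\sigma_G$ is computable, the relation $\{(G,r):T\vdash\sigma_G\le r\}$ is recursively enumerable, and by completeness $\sup_{M\models T}\mathrm{val}_M(\sigma_G)=\inf\{r\in\mathbb{Q}:T\vdash\sigma_G\le r\}$. Hence $G\mapsto\sup_{M\models T}\mathrm{val}_M(\sigma_G)=\omega^*(G)$ would be upper semicomputable, contradicting the non-upper-semicomputability of $\omega^*$ recorded in the first paragraph. Therefore no such theory $T$ exists.
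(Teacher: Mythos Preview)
Your high-level strategy matches the paper's: encode the entangled game value as (the supremum over models of) a continuous-logic sentence built effectively from the game, then use completeness for the effectively axiomatized theory to get upper semicomputability of $\val^*$, contradicting $\mip^*=\mathrm{RE}$. The use of the Tsirelson hypothesis for the inequality $\le \val^*(G)$ and of the strong Tsirelson witness for equality is exactly right.

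The genuine gap is precisely the step you flag as ``the main obstacle'': producing $\sigma_G$ as a sentence in the language at hand. In the language of pairs of \cstar-algebras there are two sorts with their separate \cstar-structures and no term forming $A^x_a\otimes B^y_b$, no predicate for the $\max$-tensor norm, and no sort for an ambient algebra. Your proposed ``framing in which a model carries $C\otimes_{\max}D$ (or a quotient thereof)'' does not repair this: first, you would be changing the language in which $T$ is given, which the theorem does not allow; second, ``being the max tensor product'' of two commuting subalgebras is not first-order, so in any such one-sorted framing your class of models would contain arbitrary quotients of $C\otimes_{\max}D$, and then your equality $\mathrm{val}_{M_0}(\sigma_G)=\val^*(G)$ can fail for the strong-Tsirelson witness (the ambient algebra in $M_0$ might be, e.g., $C_0\otimes D_0$). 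So as written, $\sigma_G$ is not a sentence of the relevant language and the key identity $\sup_{M\models T}\mathrm{val}_M(\sigma_G)=\val^*(G)$ is not established.

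The paper closes this gap not by building the $\max$-norm into the language, but by expanding the language with a single binary predicate $P$ and passing from $T$ to an effectively axiomatizable $T'$ whose models are $(C,D,P)$ with $(C,D)\models T$ and $P(c,d)=\phi(c\otimes d)$ for some state $\phi$ on $C\otimes_{\max}D$. The point is that ``$P$ comes from a state on the max tensor product'' \emph{is} effectively axiomatizable: it is exactly bilinearity, $P(1,1)=1$, and the positivity scheme $\sum_{i,j}P(c_i^*c_j,d_i^*d_j)\ge 0$ for each arity. With $P$ in hand, your $\sup_\phi$ is absorbed into the passage from $T$ to $T'$, and $\sigma_{\frak G}=\sup_A\sup_B\sum\pi(x,y)V(x,y,a,b)P(A^x_a,B^y_b)$ is an honest sentence whose supremum over models of $T'$ equals $\val^*(\frak G)$. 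Your observation that $T_G\ge 0$ and hence $\sup_\phi \phi(T_G)=\|T_G\|_{\max}$ is correct but unnecessary once $P$ is present; the predicate trick replaces the need for any ambient-norm sort.
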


\begin{proof}
Suppose, towards a contradiction, that such a theory $T$ exists.  Work now in the language of pairs of \cstar-algebras expanded by a new unary predicate symbol.  One can then consider the effectively axiomatizable theory $T'$ whose models are of the form $(C,D,P)$, where $(C,D)$ is a model of $T$ and $P(c,d)=\phi(c\otimes d)$ for some state $\phi$ on $C\otimes_{\max}D$.  That this class of structures is axiomatizable (and effectively so) is due to the fact that states on the maximal tensor product are just extensions of unital linear functionals on the algebraic tensor product that are positive on the algebraic tensor product.

One can now reach a contradiction to the main result of \cite{MIP*} as follows.  We assume familiarity with nonlocal games and their quantum and commuting values as defined in \cite{MIP*}.  Given a nonlocal game $\frak G$ with $k$ questions, $n$ answers, probability distribution $\pi$, and decision predicate $D$, one can consider the first-order sentence $\sigma_{\frak G}$ in the language of $T'$ given by
$$\sup_{A}\sup_B \sum_{(x,y)\in [k]}\pi(x,y)\sum_{(a,b)\in [n]}D(x,y,a,b)P(A^x_a,B^y_b).$$ Here, it is understood that $A$ ranges over $k$-tuples of POVMs of length $n$ from the first \cstar-algebra in the pair while $B$ ranges over $k$-tuples of POVMs of length $n$ from the second \cstar-algebra in the pair.  That these quantifications are allowed, that is, that the sets in question are definable, follows from Proposition \ref{definable}.  Since the modulus of definability of these sets is explicit (in fact, by Corollary \ref{almostnear}, it is $\epsilon\mapsto \frac{\epsilon}{2}$), $\sigma_{\frak G}$ is effectively describable from the description of $\frak G$.  (See \cite[Section 2]{universal} for more on these matters.)

The assumptions on the theory $T$ imply that
$$\sup\{\sigma_{\frak G}^{(C,D,P)} \ : \ (C,D,P)\models T'\}=\val^*(\frak G),$$ where $\val^*(\frak G)$ is the quantum entangled value of $\frak G$.  Since $T'$ is effectively axiomatizable, the Completeness Theorem for continuous logic implies that one can effectively enumerate upper bounds for the left-hand side of the above display, whence also for $\val^*(\frak G)$.  Since effective lower bounds for $\val^*(\frak G)$ always exist, this allows one to effectively approximate $\val^*(\frak G)$, which, by the  main result of \cite{MIP*}, would allow one to decide the halting problem, yielding a contradiction.
\end{proof}

\begin{remark}
The assumption of at least one model that is a strong Tsirelson pair is necessary as the class of pairs of abelian \cstar-algebras is clearly effectively axiomatizable and consists entirely of nuclear (and thus Tsirelson) pairs.
\end{remark}

\begin{remark}
We can now finish the proof of Corollary \ref{strongpair}.  Recall that we claimed that if one $C$ or $D$ is subhomogeneous, then $(C,D)$ is not a strong Tsirelson pair.  Suppose, towards, a contradiction that $C$ is $n$-subhomogeneous and $(C,D)$ is a strong Tsirelson pair.  Then letting $T$ denote the theory of pairs of \cstar-algebras for which the first element of the pair is $n$-subhomogeneous, we see that $T$ is effectively axiomatizable (this follows from the effective axiomatization of $n$-subhomogeneous \cstar-algebras given in \cite[Subsection 2.5.4]{Muenster}), all models of $T$ are nuclear pairs (and thus Tsirelson pairs), and has at least one model that is a strong Tsirelson pair (by our contradiction assumption).  This contradicts the statement of Theorem \ref{noeffective}.
\end{remark}

\begin{cor}\label{effcor}
There can be no effectively axiomatizable theory $T$ of \cstar-algebras, all of whose models have the TP, and which has at least one model that has the STP.  In particular, neither the elementary class of \cstar-algebras with the TP nor the elementary class of \cstar-algebras with the STP are effectively axiomatizable.  
\end{cor}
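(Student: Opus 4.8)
The plan is to derive a contradiction with Theorem \ref{noeffective} by transferring a putative theory of single \cstar-algebras to a theory of pairs via relativization. Suppose, towards a contradiction, that $T$ is an effectively axiomatizable theory of \cstar-algebras all of whose models have the TP and at least one of whose models, say $C_0$, has the STP. Working in the language of pairs of \cstar-algebras, I would let $T'$ be the theory obtained by relativizing every axiom of $T$ to the first algebra of the pair, imposing no constraints on the second. Since relativizing a sentence to a fixed coordinate is an effective syntactic operation, $T'$ is again effectively axiomatizable, and its models are exactly the pairs $(C,D)$ with $C\models T$ and $D$ an arbitrary \cstar-algebra.

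Next I would verify that $T'$ witnesses precisely the configuration that Theorem \ref{noeffective} forbids. If $(C,D)\models T'$, then $C$ has the TP, so by the very definition of the TP the pair $(C,D)$ is a Tsirelson pair; hence every model of $T'$ is a Tsirelson pair. For the remaining requirement I would take the STP model $C_0$ of $T$ together with any non-subhomogeneous \cstar-algebra $D_0$ (for instance $C^*(\bb F_\infty)$, whose lack of the TP is irrelevant here since it appears only as the second coordinate). Then $(C_0,D_0)\models T'$, and by the lemma characterizing the STP (a \cstar-algebra has the STP if and only if it forms a strong Tsirelson pair with every non-subhomogeneous \cstar-algebra), the pair $(C_0,D_0)$ is a strong Tsirelson pair. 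Thus $T'$ is an effectively axiomatizable theory in the language of pairs all of whose models are Tsirelson pairs and at least one of whose models is a strong Tsirelson pair, contradicting Theorem \ref{noeffective}.

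For the ``in particular'' clause I would invoke the elementarity of both classes established above, so that each has a (complete) theory whose models are exactly the members of the class. If the class of \cstar-algebras with the TP were effectively axiomatizable, I would take $T$ to be its theory: all models then have the TP, and at least one is non-subhomogeneous with the TP and hence has the STP, for example any nuclear non-subhomogeneous \cstar-algebra such as the CAR algebra (which is nuclear, hence QWEP, hence has the TP by Corollary \ref{QWEPTP}, and has irreducible representations of unbounded dimension). This $T$ then violates the main statement just proved. Similarly, if the class of \cstar-algebras with the STP were effectively axiomatizable, taking $T$ to be its theory yields a theory all of whose models have the TP and each of which has the STP, again contradicting the main statement.

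The only point requiring care, rather than a genuine obstacle, is confirming that the passage from $T$ to $T'$ preserves effectiveness and that ``$(C,D)$ is a Tsirelson pair for the arbitrary second coordinate $D$'' follows merely from $C$ having the TP; both are immediate from the definitions, so the argument amounts to a bookkeeping reduction to Theorem \ref{noeffective} supplemented by an explicit non-subhomogeneous example with the TP.
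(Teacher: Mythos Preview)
Your argument is correct and is precisely the natural reduction the paper has in mind (the corollary is stated without proof there): relativize $T$ to the first coordinate of a pair, invoke the TP of the first coordinate to get Tsirelson pairs, and use the STP characterization to exhibit a strong Tsirelson pair model, contradicting Theorem~\ref{noeffective}. Your handling of the ``in particular'' clause, including the explicit STP witness, is also fine.
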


The following consequence of the previous corollary improves upon \cite[Theorem 2.2]{AGH}.

\begin{cor}
There can be no effectively axiomatizable theory $T$ of \cstar-algebras, all of whose models have the QWEP property, and which has at least one model that is not subhomogeneous.
\end{cor}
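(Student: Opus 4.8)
The plan is to derive this as a formal consequence of Corollary \ref{effcor}, using only the implication ``QWEP $\Rightarrow$ TP'' and the definition of the STP. Suppose, towards a contradiction, that such an effectively axiomatizable theory $T$ of \cstar-algebras exists: every model of $T$ has the QWEP property, and at least one model $C_0 \models T$ is not subhomogeneous.

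First I would observe that, by Corollary \ref{QWEPTP}, every \cstar-algebra with QWEP has the TP; hence every model of $T$ has the TP. Next, I would pin down the distinguished model. Since $C_0 \models T$, it has QWEP and therefore the TP; and by hypothesis $C_0$ is not subhomogeneous. By the definition of the STP (the TP together with not being subhomogeneous), $C_0$ has the STP. Thus $T$ is an effectively axiomatizable theory of \cstar-algebras all of whose models have the TP and at least one of whose models has the STP.

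This is precisely the configuration forbidden by Corollary \ref{effcor}, yielding the desired contradiction and completing the proof.

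I do not expect any genuine obstacle here: the content is entirely concentrated in Theorem \ref{noeffective} and its reformulation Corollary \ref{effcor}, and the present statement is obtained simply by substituting the weaker hypothesis ``QWEP'' for ``TP'' (legitimate because QWEP is the stronger property by Corollary \ref{QWEPTP}). The only point to verify is the bookkeeping that the non-subhomogeneous QWEP model indeed witnesses the STP, which is immediate from the definition.
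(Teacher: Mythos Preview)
Your proof is correct and follows essentially the same approach as the paper, which simply notes that the result follows immediately from Corollaries \ref{QWEPTP} and \ref{effcor}. Your write-up makes explicit the one-line bookkeeping (that the non-subhomogeneous QWEP model has the STP), which is exactly what the paper leaves implicit.
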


\begin{proof}
This follows immediately from Corollaries \ref{QWEPTP} and \ref{effcor}.
\end{proof}

Using Corollary \ref{effcor}, an argument identical to that establishing \cite[Corollary 2.4]{AGH} establishes the following:

\begin{cor}
The class of \cstar-algebras without the (S)TP is not closed under ultraproducts.
\end{cor}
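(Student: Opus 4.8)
The plan is to argue by contraposition through elementarity and then invoke Corollary \ref{effcor}. I will treat the case of the TP in detail; the case of the STP is identical, since Corollary \ref{effcor} applies verbatim to both classes. Write $\mathcal{K}$ for the class of \cstar-algebras with the TP and $\mathcal{K}^c$ for its complement. Suppose, toward a contradiction, that $\mathcal{K}^c$ is closed under ultraproducts. I will show that $\mathcal{K}$ is then effectively axiomatizable, directly contradicting Corollary \ref{effcor}.

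First I would observe that $\mathcal{K}^c$ is automatically closed under ultraroots. Indeed, $\mathcal{K}$ is elementary by the elementarity theorem above, hence closed under ultrapowers; taking the contrapositive of the implication ``$A\in\mathcal{K}\Rightarrow A^{\u}\in\mathcal{K}$'' yields exactly ``$A^{\u}\in\mathcal{K}^c\Rightarrow A\in\mathcal{K}^c$.'' Combined with the standing assumption of closure under ultraproducts and the evident closure under isomorphism, the characterization of elementary classes in continuous logic (closure under isomorphism, ultraproducts, and ultraroots) shows that $\mathcal{K}^c$ is itself elementary.

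Next I would exploit that $\mathcal{K}$ and $\mathcal{K}^c$ are now complementary elementary classes. Let $\mathcal{C}$ denote the (effective) theory of all \cstar-algebras, and let $T$ and $S$ axiomatize $\mathcal{K}$ and $\mathcal{K}^c$ over $\mathcal{C}$, respectively. Since every \cstar-algebra lies in exactly one of the two classes, the theory $\mathcal{C}\cup T\cup S$ has no models. By compactness for continuous logic there is a finite $T_0\subseteq T$ and a finite $S_0\subseteq S$ with $\mathcal{C}\cup T_0\cup S_0$ unsatisfiable. I then claim that $\mathcal{C}\cup T_0$ already axiomatizes $\mathcal{K}$: any model of $\mathcal{C}\cup T_0$ lying in $\mathcal{K}^c$ would satisfy all of $S\supseteq S_0$, and hence satisfy $\mathcal{C}\cup T_0\cup S_0$, contradicting unsatisfiability; so every model of $\mathcal{C}\cup T_0$ lies in $\mathcal{K}$, while conversely $\mathcal{K}$ satisfies $T\supseteq T_0$. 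Thus $\mathcal{K}$ is finitely axiomatizable over $\mathcal{C}$, and in particular effectively axiomatizable, contradicting Corollary \ref{effcor}.

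I expect the main technical point to be the continuous-logic bookkeeping in the compactness step: one must check that the finitely many extracted conditions can be taken with rational thresholds, using the robustness of the finite inconsistency (a positive gap $\epsilon$), so that the resulting finite axiomatization genuinely consists of computable sentences and is therefore ``effective'' in the precise sense of Corollary \ref{effcor}. Everything else — the automatic closure of $\mathcal{K}^c$ under ultraroots, and the reduction of elementarity to the three closure properties — is routine, and the STP case requires no change beyond replacing ``TP'' by ``STP'' throughout.
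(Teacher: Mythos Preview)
Your argument is correct and is precisely the standard route the paper invokes (it simply cites \cite[Corollary 2.4]{AGH} rather than spelling it out): assume the complement is closed under ultraproducts, use elementarity of the TP/STP class to get closure of the complement under ultraroots, conclude the complement is elementary, and then use compactness on the two complementary axiomatizations to extract a finite (hence effective) axiomatization, contradicting Corollary~\ref{effcor}. One small refinement on your last paragraph: rather than perturbing thresholds after the fact via a ``positive gap,'' it is cleaner to choose $T$ and $S$ from the outset to consist only of conditions $\varphi\le q$ with $\varphi$ a restricted (computable-connective) sentence and $q\in\mathbb{Q}$---such conditions are dense and still axiomatize any elementary class---so that the finite $T_0$ produced by compactness is automatically effective.
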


\section{Conflicts of interests statements}

The authors have no competing interests to declare that are relevant to the content of this article.


\begin{thebibliography}{99}
\bibitem{AGH} J. Arulseelan, I. Goldbring, and B. Hart, \emph{The undecidability of having QWEP}, preprint.
\bibitem{BO} N. Brown and N. Ozawa, \emph{C*-algebras and finite-dimensional appoximations}, AMS Graduate Studies in Mathematics \textbf{88}.
\bibitem{Muenster} I. Farah, B. Hart, M. Lupini, L. Robert, A. Tikuisis, A. Vignati and
 W. Winter, \emph{Model Theory of $\mathrm{C}^*$-algebras}, Memoirs of the AMS \textbf{271}(2021).
\bibitem{Fr} T. Fritz. \emph{Tsirelson’s problem and Kirchberg’s conjecture}, Reviews in Mathematical Physics, 24(05):1250012 (2012).
\bibitem{GoldQWEP} I. Goldbring, \emph{Model theory and the QWEP problem}, Illinois Journal of Mathematics \textbf{59} (2015), 795-799.
\bibitem{universal} I. Goldbring and B. Hart, \emph{The universal theory of the hyperfinite II$_1$ factor is not computable}, preprint.  arXiv 2006.05629.
\bibitem{GS} I. Goldbring and T. Sinclair, \emph{Omitting types in operator systems}, Indiana University Mathematics Journal \textbf{66} (2017), 821-844.
\bibitem{HT} U. Haagerup and S. Thorbjornsen, \emph{A new application of random matrices:  $\operatorname{Ext}(C^*_{\operatorname{red}}(\bb F_2))$ is not a group}, Annals of Mathematics \text{162} (2005), 711-775.
\bibitem{MIP*}  Z. Ji, A. Natarajan, T. Vidick, J. Wright and H. Yuen, MIP* = RE, preprint, arxiv 2001.04383
\bibitem{Junge} M. Junge, M. Navascues, C. Palazuelos, D. Perez-Garcia, V. Scholz, and R. Werner. Connes’ embedding problem and Tsirelson’s problem. Journal of Mathematical Physics, 52(1):012102, 2011.
\bibitem{K} E. Kirchberg, \emph{On nonsemisplit extensions, tensor products and exactness of group \cstar-algebras}, Invent. Math. \textbf{112} (1993), no. 3, 449–489.
\bibitem{ozawa2} N. Ozawa, \emph{Dixmier approximation and symmetric amenability for \cstar-algebras}, J. Math. Sci. Univ. Tokyo \textbf{20} (2013), 349–374.
\textbf{66} (2017), 821-844.
\bibitem{Oz} \bysame, \emph{About the Connes embedding conjecture}, Japanese Journal of Mathematics, 8(1):147--183, 2013.
\bibitem{Pisier} G. Pisier, \emph{On the lifting property for \cstar-algebras}, to appear in the Journal of Noncommutative Geometry.
\bibitem{Pisier2} \bysame, \emph{Tensor products of \cstar-algebras and operator spaces:  The Connes-Kirchberg problem}, London Mathematical Society Student Texts Book \textbf{96} (2020).
\bibitem{tsirelson} B. Tsirelson.  \emph{Some results and problems on quantum
Bell-type inequalities}, Hadronic J. Suppl. \textbf{8} (1993), 329-
345.
\bibitem{tsirelson2} B. Tsirelson.  \emph{Bell inequalities and operator algebras}, available at \texttt{https://www.tau .ac.il/~tsirel/download/bellopalg.pdf}
\end{thebibliography}
\end{document}